\newtheorem{theorem}{Theorem}[section]
\newtheorem{lemma}[theorem]{Lemma}
\theoremstyle{definition}
\newtheorem{definition}[theorem]{Definition}
\newtheorem{question}[theorem]{Question}
\theoremstyle{remark}
\newtheorem{remark}[theorem]{Remark}
\numberwithin{equation}{section}
\begin{document}
\setcounter{page}{1}

\title[Copositive and completely positive matrices - linear preservers]
{Linear preservers of copositive and completely positive matrices}

\author[Sachindranath]{Sachindranath Jayaraman}
\address{School of Mathematics\\ 
Indian Institute of Science Education and Research Thiruvananthapuram\\ 
Maruthamala P.O., Vithura, Thiruvananthapuram -- 695 551, Kerala, India.}

\email{sachindranathj@iisertvm.ac.in, sachindranathj@gmail.com}

\author[Vatsalkumar]{Vatsalkumar N. Mer}
\address{Department of Mathematics\\ 
Chungbuk National University\\ 
Cheongju - 28644, Korea.}

\email{vnm232657@gmail.com}

\subjclass[2010]{15A86, 15B48}

\keywords{Completely positive/copositive matrices, semipositive matrices, 
positive semidefinite matrices, linear preserver problems, Lorentz cone}

\begin{abstract}
The objective of this manuscript is to understand the structure of an invertible linear 
map on the space of real symmetric matrices $\mathcal{S}^n$ that leaves invariant the 
closed convex cones of copositive and completely positive matrices ($COP_n$ and $CP_n$). A description of an invertible linear map on $\mathcal{S}^2$ such that $L(CP_2) \subset CP_2$ 
is completely determined. 
\end{abstract}

\maketitle

\section{Introduction}\label{sec-1}

We work throughout over the field $\mathbb{R}$ of real numbers.
Let $M_{m,n}(\mathbb{R})$ denote the set of all $m \times n$ matrices over $\mathbb{R}$. 
When $m = n$, this set will be denoted by $M_{m}(\mathbb{R})$ or $M_{n}(\mathbb{R})$. 
The subspace of real symmetric matrices will be denoted by $\mathcal{S}^n$. A subset $K$ 
of finite dimensional real Hilbert space $V$ is called a convex cone if $K+K \subseteq K$ 
and $\alpha K \subseteq K$ for every $\alpha \geq 0$. The {\it conic hull} of a subset $S$ 
of $V$ is defined to be 
$$\text{cone}(S) = \Big\{\displaystyle \sum_{i=1}^{m} \alpha_i x_i: x_i \in S, \alpha_i 
\geq 0, m \in \mathbb{N}\Big\}.$$ It is obvious that $\text{cone}(S)$ is a convex cone in $V$. 
If $K$ is a subset of $V$, the dual cone of $K$ is defined as 
$K^{\ast}:= \{y \in V: \langle y , x \rangle \geq 0 \ \forall x \in K \}$. 
A convex cone $K$ is said to be proper if it is topologically closed, pointed 
($K \cap -K = \{0\}$) and has nonempty interior, denoted by $K^{\circ}$. The following 
well-known facts will be used in this manuscript. A standard reference to these is 
\cite{Ben-Israel}.

\begin{itemize}\label{basic-facts-cones}
\item The dual $K^{\ast}$ is a closed convex cone for any subset $K$ of $V$.
\item If $K$ is a convex cone, then $(K^{\ast})^{\ast} = \text{closure}(K)$. A subset 
$K$ is a closed convex cone in $V$ if and only if $(K^{\ast})^{\ast} = K$.
\item A closed convex cone $K$ in $V$ is said to be self-dual if $K = K^{\ast}$.
\item If $K_1,  K_2$ are closed convex cone in $V$, then $(K_1+K_2)^{\ast} = 
K_1^{\ast} \cap K_2^{\ast}$ 
and $\text{closure}(K_1^{\ast} + K_2^{\ast}) = K_1^{\ast} \cap K_2^{\ast}$.
\end{itemize}

Three of the well known examples of proper self-dual cones commonly used in the optimization literature are 
(1) The nonnegative orthant 
$\mathbb{R}^n_+ = \{x = (x_1, \ldots, x_n)^t : x_i \geq 0, \ i = 1, \ldots, n\}$ 
of $\mathbb{R}^n$, (2) The Lorentz or the ice-cream cone 
$\mathcal{L}^n_{+} = \{x = (x_1, \ldots, x_n)^t : x_n \geq 0, \ 
x_{n}^2 - x_{n-1}^2 - \ldots x_{1}^2 \geq 0\}$ in $\mathbb{R}^n$ and (3) The set 
$\mathcal{S}^n_{+}$ of all symmetric positive semidefinite matrices in the space 
$\mathcal{S}^n$ of real symmetric matrices. (Note that $\mathbb{R}^n$ carries the standard 
inner product, while $\mathcal{S}^n$ is equipped with the trace inner product). A cone 
$K$ is said to be polyhedral if $K = X(\mathbb{R}^m_+)$ for some $X \in M_{m,n}(\mathbb{R})$ 
and simplicial if $X$ is an invertible matrix. Our focus in this manuscript is on the 
copositive and complete positive cones, as defined below.

\begin{definition}\label{cop-cp}
An $n \times n$ real symmetric matrix $A$ is called copositive if its quadratic form is 
nonnegative on the nonnegative orthant of $\mathbb{R}^n$. The matrix $A$ is said to be 
completely positive if $A = BB^t$ for some nonnegative (not necessarily square) 
matrix $B$.
\end{definition}

\medskip

The set of copositive matrices forms a closed convex cone $COP_n$ in $\mathcal{S}^n$ 
with nonempty interior and its dual $CP_n$ consists of the closed convex cone of 
completely positive matrices; moreover, each of these cones is contained 
in the dual of the other. If $\mathcal{N}^n_+$ denotes the cone of nonnegative matrices in 
$\mathcal{S}^n$, then $\mathcal{S}^n_+ + \mathcal{N}^n_+ \subseteq COP_n$, with equality 
when $n \leq 4$. Note that in such a case, $CP_n = \mathcal{S}^n_+ \cap \mathcal{N}^n_+$. 
It can also be easily proved that $\mathcal{N}^n_+$ is a self-dual cone in $\mathcal{S}^n$. 
In fact, $\mathcal{N}^n_+$ is isomorphic to $\mathbb{R}^{n(n+1)/2}_+$. For a comprehensive treatment of these two cones, we refer to the recent monograph by Berman and Shaked-Monderer \cite{bs-m}.

\medskip

\begin{definition}\label{defn-1}
For proper cones $K_1$ and $K_2$ in $\mathbb{R}^n$ and $\mathbb{R}^m$, respectively, 
we have the following notions. $A \in M_{m,n}(\mathbb{R})$ is 
\begin{enumerate}
\item $(K_1,K_2)$-nonnegative if $A (K_1) \subseteq K_2$.
\item $(K_1,K_2)$-positive if $A (K_1 \setminus \{0\}) \subseteq K_2^{\circ}$.
\item $(K_1,K_2)$-semipositive if there exists a $x \in K_1^{\circ}$ such that 
$Ax \in K_2^{\circ}$.
\end{enumerate}
\end{definition}

It is clear that if $A$ is $(K_1,K_2)$-positive, then $A$ is $(K_1,K_2)$-semipositive. 
We denote the set of all matrices that are $(K_1,K_2)$-nonnegative by $\pi(K_1,K_2)$. 
When $K_1 = K_2 = K$, this will be denoted by $\pi(K)$. It is a well-known fact that when 
$K$ is a proper cone, $\pi(K)^{\circ}$ is precisely the 
set of all $K$-positive elements. One may refer to \cite{tam} for more information on the 
above defined notions. Let us also denote the set of all matrices that are 
$(K_1,K_2)$-semipositive by $\text{Sem}(K_1,K_2)$. When $K_1 = K_2 = K$, this will be 
denoted by $\text{Sem}(K)$. Semipositive matrices 
occur very naturally in certain optimization problems, namely, the linear 
complementarity problem. For a complete description of this class, one may refer 
to \cite{csv-1}. This work began in trying to find possible applications 
of an interesting characterization of nonnegativity obtained recently: A matrix $A$ is 
nonnegative relative to a cone $K$ if and only if for every semipositive matrix $B$ 
(relative to $K$), the matrix $A+B$ is semipositive (again, relative to $K$)  
(see the next section for the appropriate reference). A restatement of this to a linear 
map defined on a finite dimensional real Hilbert space $V$ leaving invariant a proper 
cone $K$ in $V$ holds as well. Understanding the structure of the collection of all 
linear maps with the above property has been studied for a long time and considering the characterization of nonnegativity stated above, we were naturally led to the following 
preserver problem.\\

\begin{question}\label{qn-1}
Determine the structure of an invertible linear map $L$ on $\mathcal{S}^n$ such that 
$L(K) \subset K$, where $K$ is either $COP_n$ or $CP_n$.
\end{question}

\medskip
Suppose $K$ is the complete positive cone in $\mathcal{S}^n$. The discussion 
in the previous paragraph tells us that $L \in \pi(K) \ \iff \ L+L_1 \in \text{Sem}(K)$ 
for every $L_1 \in \text{Sem}(K)$. A recent result from \cite{csv-1} says that 
$\text{Sem}(K) = \{T_1T_2^{-1}: \ \text{where} \ T_1, T_2 \in \pi(K)^{\circ}\}$. 
We thus have the following result.

\begin{theorem}\label{starting-thm}
A linear map $L$ on $\mathcal{S}^n$ leaves invariant the complete positive cone $CP_n$ 
if and only if $L = L_5L_6^{-1} - L_3L_4^{-1}$, where $L_3, L_4, L_5$ and $L_6$ are elements 
of $\pi(CP_n)^{\circ}$ with $L_5$ and $L_6$ depending on $L_3$ and $L_4$.
\end{theorem}

Since $CP_n = \mathcal{S}^n_+ \cap \mathcal{N}^n_+$ for $n \leq 4$, we have 
$\pi(\mathcal{N}^n_+)^{\circ} \cap \pi(\mathcal{S}^n_+)^{\circ} \subseteq 
\pi(CP_n)^{\circ}$ for $n \leq 4$. While $\pi(\mathcal{N}^n_+)^{\circ}$ is not hard 
to determine, $\pi(\mathcal{S}^n_+)$ is not completely understood yet. This makes it 
difficult to determine the interior of $\pi(CP_n)$ even for small values of $n$. 
However, when $n = 2$, one can identify $\mathcal{S}^2_+$ with $\mathcal{L}^3_+$, the 
Lorentz cone in $\mathbb{R}^3$. We exploit this to describe the structure of an invertible 
linear map on $\mathcal{S}^2$ such that $L(CP_2) \subset CP_2$.\\

\medskip
This manuscript is organized as follows. Section \ref{sec-1} is introductory. 
The main results are presented in Section \ref{sec-2}, which is subdivided into five 
subsections for ease of reading. Each of these subsections is self-explanatory.

\section{Main Results}\label{sec-2}

The main results are presented in this section.

\subsection{A brief detour into preservers of $COP_n$} \hspace*{\fill} \\
\label{sec-2.1}
We begin with a breief description of the question that we are interested in. Any 
linear map on $\mathcal{S}^n$ can be expressed as 
$L(X) = \displaystyle \sum_{i=1}^{n(n+1)/2} A_iXB_i$ for appropriate square matrices 
$A_i$ and $B_i$ in $M_n(\mathbb{R})$. A standard map $L$ on $M_{m,n}(\mathbb{R})$ 
is a map of the form $X \mapsto AXB$ for matrices $A$ and $B$ of appropriate sizes. Such 
a map $L$ is invertible if and only if $A$ and $B$ are invertible. A linear preserver is 
a linear map on a space of matrices that preserves a subset 
$\mathcal{A}$ or a relation $\mathcal{R}$. There is rich history on this topic within 
linear algebra/matrix theory as can be evidenced from MathSciNet. A good reference to linear preservers is \cite{pll et al}. The monographs \cite{zhang-tang-cao} and \cite{jst} collect 
several interesting resouces on preserver problems. There are two types of preserver problems. 
The first one is to determine the structure of a map $L$ defined on a matrix space such 
that $L(K) = K$ ($K$ is a subset of the matrix space). These are called {\it onto/strong} preservers. The other one is to determine the structure of $L$ such that $L(K) \subset K$, 
which are called {\it into} preservers. Strong preservers are many times tractable, especially 
when $K$ contains a basis for the underlying space. In this case, a strong preserver is an 
{\it into} preserver that is invertible with $L^{-1}$ being an {\it into} 
preserver. Many linear preservers arise through standard maps, although there are 
exceptions. For instance, {\it into} preservers of copositive matrices are not in standard 
form. It is not hard to construct a map that preserves the collection of copositive matrices that 
is not in standard form. If $A$ is a strictly copositive matrix (such a matrix is 
in the interior of the copositive cone) and $B$ is a rank one completely positive matrix, 
both of the same size, then the map $L$ defined by $X \mapsto \langle A,X \rangle B$ 
preserves completely positive matrices, but is not in standard form. 


Notice that it is enough to consider linear maps that preserve either $COP_n$ or $CP_n$, 
as these cones are duals of each other. For recent results on linear preservers of 
copositive matrices, the reader may look into the paper \cite{fjz}. As the authors point 
out in \cite{fjz}, the structure of an {\it into} linear preserver of copositivity is 
subtle and remains unsolved till date. It is reasonable to believe that any 
{\it into} preserver of copositive matrices is necessarily of the form 
$\displaystyle \sum_{i} A_iXA_i^t$ for nonnegative matrices $A_i$. However, this turns 
out to be false even in the $2 \times 2$ case, as pointed out in \cite{fjz}. Several 
interesting results on linear preservers of copositive matrices were obtained in \cite{fjz}. 
Strong preservers of copositive matrices arise only through {\it nonnegative monomial 
congruence} (see Example $2$ and Corollary $12$ of \cite{gst} as well as \cite{shitov}). 
Recall that a monomial matrix is one that has exactly one entry in each row and column. 
We record this result below.

\begin{theorem}\label{gst-Shitov}
(\cite{gst, shitov}) A linear mapping $L: \mathcal{S}^n \rightarrow \mathcal{S}^n$ 
is a strong preserver of $COP_n$ if and only if $L$ is a monomial congruence.
\end{theorem}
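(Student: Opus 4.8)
The plan is to prove both implications, dispatching the forward direction by a one-line computation and reducing the converse to a statement about automorphisms of the \emph{completely positive} cone, where the extreme rays are explicitly known. For the forward direction, suppose $L(X) = P^t X P$ with $P = D\Pi$ a nonnegative monomial matrix ($D$ a positive diagonal matrix, $\Pi$ a permutation matrix). For $A \in COP_n$ and any $x \geq 0$ one has $Px \geq 0$, hence $x^t (P^t A P) x = (Px)^t A (Px) \geq 0$, so $L(COP_n) \subseteq COP_n$; since $P^{-1}$ is again a nonnegative monomial matrix, $L^{-1}$ has the same form and $L(COP_n) = COP_n$. For the converse, let $L$ be a strong preserver of $COP_n$. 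I would first pass to the dual cone: because $CP_n = COP_n^{\ast}$ and $L(COP_n) = COP_n$, a standard duality argument shows the trace-adjoint $M := L^{\ast}$ satisfies $M(CP_n) = CP_n$. As the adjoint of a monomial congruence is again a monomial congruence and $L = M^{\ast}$, it suffices to prove that $M$ is a monomial congruence. The decisive advantage is that the extreme rays of $CP_n$ are understood completely: they are exactly the rays $\mathbb{R}_+\, xx^t$ with $0 \neq x \geq 0$. Being a linear automorphism of the proper cone $CP_n$, the map $M$ permutes extreme rays, so $M(xx^t) = \phi(x)\phi(x)^t$ up to a positive scalar, for some bijection $\phi$ of the nonnegative rays.

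Next I would isolate the coordinate rays $\mathbb{R}_+\, e_i e_i^t$ among all extreme rays by an automorphism-invariant property. Since a matrix in $CP_n \subseteq \mathcal{S}^n_+ \cap \mathcal{N}^n_+$ with a vanishing diagonal entry has the corresponding row and column equal to zero, the matrices $xx^t$ with $|\mathrm{supp}(x)| = k$ generate precisely the coordinate faces $F_T \cong CP_k$ (with $T = \mathrm{supp}(x)$, $|T| = k$), and the $e_i e_i^t$ are the extreme rays of minimal support. Because $M$ preserves the face lattice, I would argue that it permutes these coordinate faces, hence permutes $\{e_i e_i^t\}$ by some permutation $\sigma$; composing $M$ with the associated permutation congruence, I may assume $M(e_i e_i^t) = d_i\, e_i e_i^t$ with $d_i > 0$ for every $i$.

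Finally, restricting $M$ to each two-element-support face $F_{\{i,j\}} \cong CP_2$ yields an automorphism of $CP_2$ fixing its two coordinate extreme rays. Here I would exploit that $CP_2$ is linearly isomorphic to the Lorentz cone $\mathcal{L}^3_+$, whose automorphism group is the group of positive multiples of the (orthochronous) Lorentz group; one then checks that the elements fixing the two relevant boundary rays and respecting nonnegativity reduce exactly to the diagonal congruences $Y \mapsto \mathrm{diag}(s_i, s_j)\, Y\, \mathrm{diag}(s_i, s_j)$. Matching the resulting $2 \times 2$ data across all pairs $\{i,j\}$ forces a single positive diagonal matrix $D$ with $M(Y) = DYD$, and reinstating $\sigma$ shows $M$, and hence $L = M^{\ast}$, is a nonnegative monomial congruence.

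I expect the main obstacle to be the two middle steps: distinguishing the coordinate rays intrinsically and establishing the rigidity of $\phi$. Although the extreme rays of $CP_n$ are known, its facial (edge) combinatorics are intricate, and for $n \geq 5$ the extreme rays of the dual cone $COP_n$ are not fully classified, so one cannot simply invoke a classical rank-one preserver theorem. The argument must instead determine which pairs $xx^t, yy^t$ span genuine two-dimensional faces of $CP_n$ and control $\phi$ through this incidence structure, rather than through any global linearity of $\phi$ assumed in advance.
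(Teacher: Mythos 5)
First, a point of reference: the paper does not prove this theorem --- it is quoted from \cite{gst} and \cite{shitov} --- so there is no internal proof to measure you against, and your attempt has to be judged against what a complete argument would require. The easy parts of your outline are sound: the forward direction, the passage by trace-duality from $L(COP_n)=COP_n$ to an automorphism $M=L^{\ast}$ of $CP_n$, the fact that the extreme rays of $CP_n$ are exactly the rays $\mathbb{R}_+xx^t$ with $0\neq x\geq 0$, and the final patching step (the faces $F_{\{i,j\}}$ contain bases of the symmetric matrices supported on $\{i,j\}$, which together span $\mathcal{S}^n$, so agreement with $Y\mapsto DYD$ on these faces forces $M(Y)=DYD$ everywhere).

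The two middle steps, however, are genuine gaps, and they are where the entire difficulty of the theorem lives. You never produce an automorphism-invariant property that singles out the coordinate rays $\mathbb{R}_+e_ie_i^t$ among all extreme rays: an automorphism permutes \emph{all} faces of $CP_n$, and ``support of $x$'' is not a priori an invariant, so ``$M$ permutes the coordinate faces'' is precisely the assertion that needs proof. The obvious candidate invariant --- that $e_ie_i^t$ and $e_je_j^t$ together span a two-dimensional face --- does not isolate the coordinate rays: $xx^t$ and $yy^t$ also span a two-dimensional face of $CP_3$ for $x=e_1+e_2$, $y=e_2+e_3$ (the vanishing $(1,3)$ entry of $xx^t+yy^t$ forces every cp-factorization to use only multiples of $x$ and $y$), and likewise whenever $x$ and $y$ have disjoint supports. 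So the incidence structure you propose to exploit is strictly coarser than the one you need, and extracting the permutation $\sigma$ from it is the hard core of the argument, not a formality. Separately, your rigidity step rests on the claim that $CP_2$ is linearly isomorphic to $\mathcal{L}^3_+$; this is false. It is $\mathcal{S}^2_+$ that is isomorphic to the Lorentz cone, whereas $CP_2$ has a two-dimensional proper face (the nonnegative diagonal matrices, spanned by $E_{11}$ and $E_{22}$), while every nonzero proper face of $\mathcal{L}^3_+$ is an extreme ray. The determination of the automorphisms of $CP_2$ fixing the two coordinate rays therefore has to be carried out directly, and without circularly invoking the theorem for $n=2$. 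For contrast, the cited proofs bypass the facial combinatorics entirely: Gowda, Sznajder and Tao work with the Lie algebra of $\mathrm{Aut}(CP_n)$, identifying it with the Lyapunov-like transformations $X\mapsto DX+XD$ with $D$ diagonal, so that the identity component consists of positive diagonal congruences, while Shitov argues directly on the copositive side. Your route is not unreasonable, but as written it assumes the decisive steps rather than proving them.
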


Recall that two proper cones $K_1$ and $K_2$ on $V$ are said to be isomorphic if there is a bijective map 
$L$ on $V$ such that $L(K_1) = K_2$. When $K_1 = K_2$, we say that such a map is an automorphism of 
the cone. Theorem \ref{gst-Shitov}says that an automorphism $L$ of $L(COP_n)$ is of the form 
$L(X) = MXM^t$ for some fixed nonnegative monomial matrix $M$. Consequently, any 
automorphism of the cone $CP_n$ is also necessarily of the same form. We shall use 
this fact later on. 

\subsection{Necessary results} \hspace*{\fill} \\
\label{sec-2.2}
We state necessary results that will be used in the sequel. We begin 
with the following lemma.

\begin{lemma} \label{(K_1,K_2)-nonnegative}
(Corollary $3.3$, \cite{fhp}) Let $K_1$ and $K_2$ be proper cones in $\mathbb{R}^n$ 
and $\mathbb{R}^m$, respectively, and $S: \mathbb{R}^n \rightarrow \mathbb{R}^m$ 
be a linear map such that $S(K_1) \subseteq K_2$. Then $S^t (K_2^*) \subseteq K_1^*$.
\end{lemma}

The next result we need is the following theorem from \cite{csv-1}.

\begin{theorem}\label{cone-nonnegative-sp} 
(Theorem 2.4, \cite{csv-1}) For proper cones $K_1, K_2$ in $\mathbb{R}^n$, let 
$S \in \pi(K_1,K_2)$ be an invertible linear map on $\mathbb{R}^n$. If a matrix $A$ is 
$K_1$-semipositive, then the matrix $B = SAS^{-1}$ is $K_2$-semipositive. Conversely, 
if the cones are self-dual and if $C$ is $K_2$-semipositive, then there exists a 
$K_1$-semipositive matrix $A$ such that $C = (S^t)^{-1} A S^t$. 
\end{theorem}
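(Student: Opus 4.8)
The plan is to reduce both halves of the statement to a single topological observation about invertible cone maps: if $S$ is an invertible linear map with $S(K_1) \subseteq K_2$, then in fact $S(K_1^{\circ}) \subseteq K_2^{\circ}$. First I would establish this fact. Since $S$ is invertible it is a linear homeomorphism of $\mathbb{R}^n$ onto itself, hence an open map; therefore $S(K_1^{\circ})$ is an open subset of $\mathbb{R}^n$, and it is contained in $S(K_1) \subseteq K_2$. Because $K_2^{\circ}$ is the interior of $K_2$, i.e.\ the largest open set contained in $K_2$, any open subset of $K_2$ lies inside $K_2^{\circ}$, so $S(K_1^{\circ}) \subseteq K_2^{\circ}$. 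This is where invertibility is essential, and it carries almost the entire argument.

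For the forward direction, let $x \in K_1^{\circ}$ be a witness to the $K_1$-semipositivity of $A$, so that $Ax \in K_1^{\circ}$. Set $y = Sx$. By the observation above, $y \in K_2^{\circ}$, and moreover $By = SAS^{-1}(Sx) = S(Ax) \in S(K_1^{\circ}) \subseteq K_2^{\circ}$. Thus $y$ and $By$ both lie in $K_2^{\circ}$, which is precisely the assertion that $B = SAS^{-1}$ is $K_2$-semipositive.

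For the converse I would pass to the transpose map and use self-duality. From $S(K_1) \subseteq K_2$, Lemma \ref{(K_1,K_2)-nonnegative} gives $S^t(K_2^*) \subseteq K_1^*$; when the cones are self-dual this reads $S^t(K_2) \subseteq K_1$, and $S^t$ is invertible since $S$ is. Applying the interior observation to $S^t$ yields $S^t(K_2^{\circ}) \subseteq K_1^{\circ}$. Now let $y \in K_2^{\circ}$ witness the $K_2$-semipositivity of $C$, so $Cy \in K_2^{\circ}$, and define $A = S^t C (S^t)^{-1}$ together with $x = S^t y$. Then $x \in S^t(K_2^{\circ}) \subseteq K_1^{\circ}$, and $Ax = S^t C (S^t)^{-1}(S^t y) = S^t(Cy) \in S^t(K_2^{\circ}) \subseteq K_1^{\circ}$. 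Hence $A$ is $K_1$-semipositive, while $C = (S^t)^{-1} A S^t$ holds by the definition of $A$, completing the converse.

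The main obstacle, such as it is, concentrates entirely in the interior-to-interior claim; once that is in hand, both directions are symmetric bookkeeping with the witnesses $x$ and $y$. The two points I would be most careful about are that invertibility is genuinely used (a merely $(K_1,K_2)$-nonnegative map can collapse interior points to the boundary, so the claim can fail without it), and that self-duality enters only in the converse, where it is needed to convert the dual inclusion $S^t(K_2^*) \subseteq K_1^*$ from Lemma \ref{(K_1,K_2)-nonnegative} into the usable inclusion $S^t(K_2) \subseteq K_1$.
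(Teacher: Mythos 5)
Your proof is correct. Note that the paper itself offers no proof of this statement --- it is quoted verbatim as Theorem 2.4 of the cited reference \cite{csv-1} --- so there is no internal argument to compare against; your write-up is a complete, self-contained justification. The open-mapping observation (an invertible linear map carries $K_1^{\circ}$ into $K_2^{\circ}$ whenever it carries $K_1$ into $K_2$) is exactly the right pivot, the forward direction is then immediate by transporting the witness $x \mapsto Sx$, and the converse correctly combines Lemma \ref{(K_1,K_2)-nonnegative} with self-duality to place $S^t$ in $\pi(K_2,K_1)$ and reapply the same argument. Your closing remarks about where invertibility and self-duality are genuinely used are also accurate.
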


And finally, a result (stated in Section \ref{sec-1}) that we would like to use to answer 
Question \eqref{qn-1}.

\begin{theorem}\label{thm-nonneg-char}
(Theorems $2.2$ and $2.3$, \cite{csv-2}) Let $A \in M_{m,n}(\mathbb{R})$ and let 
$K_1, \ K_2$ be proper cones in $\mathbb{R}^n$ and $\mathbb{R}^m$, respectively. Then 
$A+B \in \text{Sem}(K_1,K_2)$ for every $B \in \text{Sem}(K_1,K_2)$ if and only if $A \in \pi(K_1,K_2)$.
\end{theorem}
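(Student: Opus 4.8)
The plan is to prove the two implications separately, treating the reverse implication as a short convexity argument and concentrating the real work on the forward implication through its contrapositive. For the direction ``$A \in \pi(K_1,K_2)$ implies additive stability'', suppose $A(K_1) \subseteq K_2$ and let $B \in \text{Sem}(K_1,K_2)$ with witness $x_0 \in K_1^{\circ}$, so that $Bx_0 \in K_2^{\circ}$. Then $x_0 \in K_1$ gives $Ax_0 \in K_2$, and I would conclude $(A+B)x_0 = Ax_0 + Bx_0 \in K_2^{\circ}$ from the elementary fact that $K_2 + K_2^{\circ} \subseteq K_2^{\circ}$: a full neighborhood of $Bx_0$ lies in $K_2$, and translating it by $Ax_0 \in K_2$ keeps it inside the convex cone $K_2$. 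Since $x_0 \in K_1^{\circ}$ is then a witness, $A+B \in \text{Sem}(K_1,K_2)$.

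For the converse I would argue by contrapositive: assuming $A \notin \pi(K_1,K_2)$, I must exhibit a single $B \in \text{Sem}(K_1,K_2)$ with $A+B \notin \text{Sem}(K_1,K_2)$. First I would upgrade the failure to an interior point. Since $A$ is continuous and $K_2$ is closed, the set $A^{-1}(\mathbb{R}^m \setminus K_2)$ is open; as it meets $K_1$ and $K_1^{\circ}$ is dense in the proper cone $K_1$, there is some $w \in K_1^{\circ}$ with $Aw \notin K_2$. Because $K_2$ is a closed convex cone, separating $Aw$ from $K_2$ yields a functional $y_0 \in K_2^{*} \setminus \{0\}$ with $c := -\langle y_0, Aw \rangle > 0$; recall that on $K_2^{\circ}$ every nonzero element of $K_2^{*}$ pairs strictly positively.

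The key construction is then a single rank-one matrix. I would fix $z_0 \in K_2^{\circ}$, rescaled by a positive factor so that $\langle y_0, z_0 \rangle = c$ (possible since $\langle y_0, \cdot \rangle > 0$ on $K_2^{\circ}$ and $K_2^{\circ}$ is closed under positive scaling), and set $B = -\tfrac{1}{c}\, z_0\, y_0^{t} A \in M_{m,n}(\mathbb{R})$. A direct computation gives $Bw = -\tfrac{1}{c} z_0 \langle y_0, Aw \rangle = z_0 \in K_2^{\circ}$, so $B \in \text{Sem}(K_1,K_2)$ with witness $w \in K_1^{\circ}$; and $B^{t} y_0 = -\tfrac{1}{c} (A^{t} y_0)\langle z_0, y_0 \rangle = -A^{t} y_0$, whence $(A+B)^{t} y_0 = 0$ and $\langle y_0, (A+B)x \rangle = 0$ for every $x$. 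Since no point of $K_2^{\circ}$ pairs with $y_0$ to zero, no vector $(A+B)x$ lies in $K_2^{\circ}$, so $A+B \notin \text{Sem}(K_1,K_2)$, which is exactly the contrapositive. I expect the main obstacle to be this converse, namely forcing $B$ to be semipositive while simultaneously making $A+B$ fail semipositivity for \emph{every} interior point at once; the separation step is what resolves it, since the single functional $y_0$ both controls signs on $K_2^{\circ}$ and, through the compatibility $\langle y_0, z_0 \rangle = -\langle y_0, Aw \rangle$, lets one rank-one matrix meet both requirements.
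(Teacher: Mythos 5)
The paper does not prove this statement; it is imported verbatim from \cite{csv-2} (Theorems $2.2$ and $2.3$ there), so there is no in-paper argument to compare against line by line. Judged on its own terms, your proof is correct and complete. The easy direction is exactly the standard fact $K_2 + K_2^{\circ} \subseteq K_2^{\circ}$ applied at the witness of $B$. For the converse, every step checks out: the density of $K_1^{\circ}$ in the proper cone $K_1$ legitimately upgrades the failure of nonnegativity to a point $w \in K_1^{\circ}$ with $Aw \notin K_2$; strict separation of $Aw$ from the closed convex cone $K_2$ produces $y_0 \in K_2^{*}\setminus\{0\}$ with $\langle y_0, Aw\rangle = -c < 0$; and the rank-one matrix $B = -\tfrac{1}{c}\, z_0 y_0^{t} A$ satisfies $Bw = z_0 \in K_2^{\circ}$ (so $B$ is semipositive) while $(A+B)^{t}y_0 = 0$ kills semipositivity of $A+B$ globally, since any nonzero element of $K_2^{*}$ is strictly positive on $K_2^{\circ}$. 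The normalization $\langle y_0, z_0\rangle = c$ is the one genuinely clever point, and you justify it correctly. This is an efficient, self-contained route to a result the present manuscript only cites; whether it coincides with the argument in \cite{csv-2} cannot be determined from this source, but nothing further is needed.
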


\subsection{What about standard maps?} \hspace*{\fill} \\
\label{sec-2.3}
This section is devoted to the structure of standard maps that preserve 
copositive/completely positive matrices. Since our linear map $L$ is defined on 
$\mathcal{S}^n$, the map $L$ has a standard form if and only if it is of the form 
$X \mapsto RXR^t$ for some $R \in M_n(\mathbb{R})$. It is not hard to 
prove that a standard map $L$ on $\mathcal{S}^n$ preservers copositive matrices 
if and only if $L(X) = RXR^t$ for some nonnegative matrix $R$ 
(see Theorem $2.2$ of \cite{fjz}). The general structure of {\it into} preservers of 
$COP_2$ and $CP_2$ is taken up in the next section. In what follows below, we consider 
a well known map that preserves completely positive matrices, which actually reduces to 
the standard form. 

\medskip

Observe that both $COP_n$ and $CP_n$ contain basis for $\mathcal{S}^n$. For instance, when 
$n=2$, the collection of matrices $\{E_{11}, E_{22}, J_2\}$, where $E_{ii}$ are the matrices 
with $1$ at the ${ii}^{th}$ position and $0$ elsewhere and $J_2$ is the $2 \times 2$ matrix 
of all $1$s, is a basis for $CP_2$. Consider the Lyapunov operator $\mathcal{L}_A$ on 
$\mathcal{S}^n$ defined by $X \mapsto AX + XA^t$. The first observation is the following 
theorem. 

\begin{theorem}\label{lyapunov-1}
If $\mathcal{L}_A (CP_2) \subseteq CP_2$, then $A = \alpha I$ for some $\alpha > 0$ 
and hence, $\mathcal{L}_A$ is a constant multiple of the identity map.
\end{theorem}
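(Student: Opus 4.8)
The plan is to use the explicit description $CP_2 = \mathcal{S}^2_+ \cap \mathcal{N}^2_+$ recorded above (valid because $n = 2 \leq 4$), so that a matrix lies in $CP_2$ exactly when it is entrywise nonnegative and positive semidefinite. Writing $A = \begin{pmatrix} a & b \\ c & d \end{pmatrix}$, I would test the hypothesis $\mathcal{L}_A(CP_2) \subseteq CP_2$ against only the three matrices $E_{11}, E_{22}, J_2$, all of which lie in $CP_2$; since the images must again be completely positive, this alone should pin down $A$. A direct computation gives
$$\mathcal{L}_A(E_{11}) = \begin{pmatrix} 2a & c \\ c & 0 \end{pmatrix}, \qquad \mathcal{L}_A(E_{22}) = \begin{pmatrix} 0 & b \\ b & 2d \end{pmatrix}.$$

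First I would extract the off-diagonal constraints. The crux of the argument is the elementary fact that a $2 \times 2$ positive semidefinite matrix with a vanishing diagonal entry must have zero off-diagonal entry, since its determinant reduces to $-(\text{off-diagonal})^2$. Requiring $\mathcal{L}_A(E_{11}) \in CP_2$ therefore forces $c = 0$ (and $a \geq 0$), while $\mathcal{L}_A(E_{22}) \in CP_2$ forces $b = 0$ (and $d \geq 0$). At this stage $A$ is already reduced to a diagonal matrix $\begin{pmatrix} a & 0 \\ 0 & d \end{pmatrix}$ with $a, d \geq 0$.

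Next I would feed in $J_2$. With $A$ diagonal, one obtains
$$\mathcal{L}_A(J_2) = \begin{pmatrix} 2a & a + d \\ a + d & 2d \end{pmatrix},$$
whose entries are automatically nonnegative, so the sole remaining requirement is positive semidefiniteness. The determinant condition $4ad - (a+d)^2 \geq 0$ simplifies to $-(a - d)^2 \geq 0$, which forces $a = d$. Hence $A = \alpha I$ with $\alpha = a \geq 0$, and then $\mathcal{L}_A(X) = 2\alpha X$, a constant multiple of the identity map, as claimed.

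The computations are routine, so the one point genuinely needing care is the borderline value $\alpha = 0$. The choice $A = 0$ gives $\mathcal{L}_A \equiv 0$, which vacuously satisfies $\mathcal{L}_A(CP_2) = \{0\} \subseteq CP_2$, so the membership conditions by themselves only yield $\alpha \geq 0$. To recover the stated strict inequality $\alpha > 0$ I would appeal to the nontriviality (invertibility) of the preserver under consideration: since $\mathcal{L}_{\alpha I}$ acts on $\mathcal{S}^2$ as multiplication by $2\alpha$, it is invertible precisely when $\alpha \neq 0$, which excludes the degenerate zero map and leaves exactly $A = \alpha I$ with $\alpha > 0$.
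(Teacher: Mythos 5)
Your proof is correct and follows essentially the same route as the paper: evaluate $\mathcal{L}_A$ on the basis $\{E_{11}, E_{22}, J_2\}$ of $CP_2$, use $CP_2 = \mathcal{S}^2_+ \cap \mathcal{N}^2_+$ to force $b = c = 0$ and $a, d \geq 0$, and then extract $a = d$ from the determinant condition on $\mathcal{L}_A(J_2)$. Your observation that the membership conditions alone only give $\alpha \geq 0$, with strict positivity requiring the exclusion of the zero map, is a point the paper's proof silently glosses over, and your resolution via invertibility is the right one.
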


\begin{proof}
Let $A = \begin{bmatrix}
	      a & b\\
	      c & d
		\end{bmatrix}$. Since $CP_2 = \mathcal{S}^2_+ \cap \mathcal{N}^2_+$, it is easy 
to show that $\mathcal{L}_A (E_{11}) \in CP_2$ and $\mathcal{L}_A (E_{22}) \in CP_2$. 
These two together imply that $a \geq 0, d \geq 0, b = c = 0$. Now, we also have 
$\mathcal{L} (J_2) \in CP_2$, which implies that $a = d$. Therefore, the matrix $A$ equals 
$\alpha I$ for some $\alpha > 0$. This proves the lemma.
\end{proof}

Let us now consider the {\it generalized Lyapunov map} $\mathcal{L}_{A,B}$ on $\mathcal{S}^n$ defined by $X \mapsto AXB + B^tXA^t, \ A, B \in M_n(\mathbb{R})$ with $B$ invertible. Our 
next result is the following.

\begin{theorem}\label{lyapunov-2}
If the map $\mathcal{L}_{A,B}$ with invertible $B$ preserves $CP_2$, then 
$\mathcal{L}_{A,B} = \alpha B^tXB$ with $B \geq 0$ and for some $\alpha > 0$. Thus, the 
map $\mathcal{L}_{A,B}$ is a standard map.
\end{theorem}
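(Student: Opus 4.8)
The plan is to use the fact that $CP_2$ is the conical hull of its extreme rays, the nonnegative rank-one matrices $vv^t$ with $v \in \mathbb{R}^2_+$. Since $\mathcal{L}_{A,B}$ is linear, $\mathcal{L}_{A,B}(CP_2) \subseteq CP_2$ holds if and only if $\mathcal{L}_{A,B}(vv^t) \in CP_2$ for every $v \geq 0$. Writing $u = Av$ and $w = B^t v$, I would compute $\mathcal{L}_{A,B}(vv^t) = uw^t + wu^t$, a symmetric matrix whose determinant works out to $-(u_1 w_2 - u_2 w_1)^2 \leq 0$. Because membership in $CP_2 \subseteq \mathcal{S}^2_+$ forces the determinant to be nonnegative, it must vanish for every $v \geq 0$; that is, $Av$ and $B^t v$ are linearly dependent for all $v$ in the orthant, and hence, this being the vanishing of a quadratic form on a set with nonempty interior, for all $v \in \mathbb{R}^2$.

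First I would turn this parallelism into proportionality of the matrices. Since $B$, and therefore $B^t$, is invertible, the matrix $N := A(B^t)^{-1}$ satisfies $Nw \parallel w$ for every $w \in \mathbb{R}^2$; a $2 \times 2$ matrix for which every nonzero vector is an eigenvector is scalar, so $N = \mu I$ and $A = \mu B^t$. Substituting back gives $\mathcal{L}_{A,B}(X) = \mu B^t X B + B^t X (\mu B^t)^t = 2\mu\, B^t X B$, so the map is already in standard form with $\alpha = 2\mu$. Equivalently one may record the identity $\mathcal{L}_{A,B}(X) = B^t(CX + XC^t)B$ with $C = (B^t)^{-1}A$ and note that the argument forces $C = \mu I$.

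It then remains to pin down the sign of $\alpha$ and the nonnegativity of $B$. Evaluating at $X = I \in CP_2$ yields $\alpha\, B^t B \in CP_2 \subseteq \mathcal{S}^2_+$; since $B^t B$ is positive definite this gives $\alpha \geq 0$, and discarding the trivial zero map (the case $\alpha = 0$, i.e. $A = 0$) leaves $\alpha > 0$. For nonnegativity I would feed the extreme rays back in: $\alpha (B^t v)(B^t v)^t \in \mathcal{N}^2_+$ forces $(B^t v)_1 (B^t v)_2 \geq 0$ for every $v \geq 0$. A short case analysis on the entries of $B^t$, in which invertibility of $B$ rules out the mixed-sign possibilities, shows that either $B \geq 0$ or $B \leq 0$; since replacing $B$ by $-B$ (and $A$ by $-A$) leaves $\mathcal{L}_{A,B}$ unchanged, we may take $B \geq 0$. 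Alternatively, this last step can be obtained from the known description of standard preservers: by Lemma \ref{(K_1,K_2)-nonnegative} the trace-adjoint map $X \mapsto B X B^t$ preserves $COP_2 = CP_2^{\ast}$, whence $B$ is nonnegative by Theorem $2.2$ of \cite{fjz}.

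I expect the crux of the argument to be the determinant identity in the first paragraph: recognizing that positive semidefiniteness of $\mathcal{L}_{A,B}(vv^t)$ collapses to the vanishing of $(u_1 w_2 - u_2 w_1)^2$, and hence to the parallelism of $Av$ and $B^t v$, is exactly what makes the proportionality $A = \mu B^t$ available. The tempting shortcut of writing $\mathcal{L}_{A,B}$ as a congruence by $B$ composed with an ordinary Lyapunov operator and then invoking Theorem \ref{lyapunov-1} is circular, since congruence by $B$ is not known to preserve $CP_2$ before one has established $B \geq 0$; the extreme-ray computation is what breaks this circularity.
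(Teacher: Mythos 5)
Your proof is correct, and it takes a genuinely different route from the paper's. The paper sets $C = A(B^t)^{-1}$, asserts that $\mathcal{L}_{A,B} = \mathcal{L}_C$, and then invokes Theorem \ref{lyapunov-1} to get $C = \alpha I$. Strictly speaking the correct identity is the factorization $\mathcal{L}_{A,B} = \mathcal{L}_C \circ \Phi_B$ with $\Phi_B(X) = B^t X B$, and deducing from $\mathcal{L}_{A,B}(CP_2) \subseteq CP_2$ that $\mathcal{L}_C$ itself preserves $CP_2$ requires $\Phi_B$ to map $CP_2$ onto $CP_2$ --- which, by Theorem \ref{gst-Shitov}, would force $B$ to be a nonnegative monomial matrix, something not yet known at that point. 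So the circularity you flag in your final paragraph is not merely a ``tempting shortcut'' you are avoiding: it is essentially the paper's own argument, and your extreme-ray computation repairs the gap. What your approach buys is self-containment and transparency: evaluating $\mathcal{L}_{A,B}$ on the generators $vv^t$, reading off $\det\bigl(uw^t + wu^t\bigr) = -(u_1w_2 - u_2w_1)^2$, and concluding $A = \mu B^t$ from the identical vanishing of a quadratic form uses only the positive semidefiniteness constraint, after which the sign of $\alpha$ and the nonnegativity of $B$ follow from $X = I$ and the rank-one images (your connectedness/case analysis, or the dual-map argument via Lemma \ref{(K_1,K_2)-nonnegative} and Theorem $2.2$ of \cite{fjz}, both work). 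What the paper's reduction would buy, if the congruence step were justified, is brevity and immediate generalization to $n \leq 4$ alongside Theorem \ref{lyapunov-1}; your determinant argument is specific to $2 \times 2$ blocks, though the same extreme-ray strategy extends with more work. One small point: as you note, the zero map ($A = 0$) preserves $CP_2$ but is not of the stated form with $\alpha > 0$, so the theorem implicitly excludes it; your explicit mention of this is a minor improvement in precision over the paper.
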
 

\begin{proof}
Let us denote by $C$ the matrix $A(B^t)^{-1}$. It is then easy to verify that the map 
$\mathcal{L}_{A,B} = \mathcal{L}_{C}$, the Lyapunov map induced by the matrix $C$. 
Theorem \eqref{lyapunov-1} implies that $C= \alpha I$ for some $\alpha > 0$ and therefore 
$A = \alpha B^t$. The conclusion now follows from Theorem $2.2$ of \cite{fjz}.	
\end{proof}

\begin{remark}\label{rem-1}
It is not hard to verify that the proofs of Theorems \eqref{lyapunov-1} and 
\eqref{lyapunov-2} holds for $n \leq 4$ as $CP_n = \mathcal{S}^n_+ \cap \mathcal{N}^n_+$ 
for such values of $n$. We have thus proved the following theorem.
\end{remark}

\begin{theorem}\label{standard-combined}
Let $L$ be an invertible linear map on $\mathcal{S}^n$ such that $L(CP_n) \subseteq CP_n$. 
If $L$ is a standard map, then $L = \mathcal{L}_{A,B}$ for suitable choices of nonnegative 
matrices $A$ and $B$. Conversely, for $n \leq 4$, if $L = \mathcal{L}_{A,B}$ with $B$ 
invertible, then $L$ is a standard map.	
\end{theorem}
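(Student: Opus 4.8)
The plan is to prove the two assertions separately, assembling the machinery already in place.

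For the forward direction, write the standard map as $L(X) = RXR^t$. The key point is that the hypothesis $L(CP_n) \subseteq CP_n$ forces $R \geq 0$, and the cleanest route to this is duality. Since $COP_n$ and $CP_n$ are mutually dual proper cones, Lemma \ref{(K_1,K_2)-nonnegative} applied with $K_1 = K_2 = CP_n$ converts $L(CP_n) \subseteq CP_n$ into $L^t(COP_n) \subseteq COP_n$, where $L^t$ denotes the adjoint of $L$ with respect to the trace inner product. A one-line computation with the trace form, $\langle RXR^t, Y\rangle = \operatorname{tr}(RXR^tY) = \operatorname{tr}(X R^tYR) = \langle X, R^tYR\rangle$, shows $L^t(Y) = R^tYR$, which is again a standard map, now preserving copositivity. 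The characterization of standard copositivity preservers (Theorem $2.2$ of \cite{fjz}, valid for all $n$) then yields $R^t \geq 0$, i.e. $R \geq 0$. With nonnegativity of $R$ in hand the required decomposition is immediate: setting $A = \tfrac12 R$ and $B = R^t$ gives $\mathcal{L}_{A,B}(X) = \tfrac12 RXR^t + \tfrac12 RXR^t = RXR^t = L(X)$, with both $A$ and $B$ nonnegative. Note that this half of the statement needs no restriction on $n$.

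For the converse I would invoke Theorem \ref{lyapunov-2} together with Remark \ref{rem-1}. Given $L = \mathcal{L}_{A,B}$ with $B$ invertible and $L(CP_n) \subseteq CP_n$, set $C = A(B^t)^{-1}$; then $\mathcal{L}_{A,B} = \mathcal{L}_{C}$, an ordinary Lyapunov map preserving $CP_n$. For $n \leq 4$ one has $CP_n = \mathcal{S}^n_+ \cap \mathcal{N}^n_+$, so the computation proving Theorem \ref{lyapunov-1} applies verbatim and forces $C = \alpha I$ for some $\alpha > 0$, hence $A = \alpha B^t$. Substituting back gives $L(X) = 2\alpha\, B^tXB = (\sqrt{2\alpha}\,B^t)X(\sqrt{2\alpha}\,B^t)^t$, a standard map.

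The only genuinely substantive step is establishing $R \geq 0$ in the forward direction; I expect this to be the main obstacle, and the dualization to $COP_n$ via Lemma \ref{(K_1,K_2)-nonnegative} is what makes it routine, since it reduces the claim to the already-known copositive case rather than arguing directly with rank-one completely positive matrices $vv^t$ (which would require tracking sign consistency across the columns of $R$). Everything else is bookkeeping: verifying the explicit decomposition and observing that the restriction $n \leq 4$ enters the converse only through Remark \ref{rem-1}, i.e. through the identity $CP_n = \mathcal{S}^n_+ \cap \mathcal{N}^n_+$ underlying the Lyapunov reduction.
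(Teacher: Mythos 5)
Your proof is correct and follows essentially the same route as the paper: the forward direction rests on Theorem $2.2$ of \cite{fjz} to force $R \geq 0$ followed by an explicit splitting of $RXR^t$ as a generalized Lyapunov map, and the converse is exactly the reduction through Theorems \ref{lyapunov-1} and \ref{lyapunov-2}. Your one genuine addition is the explicit dualization via Lemma \ref{(K_1,K_2)-nonnegative} to pass from a $CP_n$-preserver to a $COP_n$-preserver before invoking \cite{fjz}; the paper applies that citation to the $CP_n$ case directly, so your version actually records a step the paper leaves implicit.
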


\begin{proof}
Suppose $L$ is an invertible standard map on $\mathcal{S}^n$ such that 
$L(CP_n) \subseteq CP_n$. Then, Theorem $2.2$ of \cite{fjz} implies that there exists a 
nonnegative invertible matrix $A$ such that $L(X) = AXA^t$ for all $X \in \mathcal{S}^n$. 
Take $A_1 = A$ and $B_1 = A^t/2$. It then follows that $L = \mathcal{L}_{A_1,B_1}$. The 
converse statement follows from Theorems \eqref{lyapunov-1} and \eqref{lyapunov-2}.
\end{proof}

Let us now ask the question if under some {\it perturbation}, the map $L$ is a standard 
map. It may be too much to ask for such a thing to hold good. However, a nice and simple 
perturbation is connected to automorphisms of a proper cone. Given a proper cone $K$ in 
a finite dimensional real Hilbert space $(V ,\langle,.\rangle)$, a linear map $L$ on $V$ 
is said to have the $\mathcal{Z}$-property relative to $K$ (written $L \in \mathcal{Z}(K)$) 
if 
$$x \in K, y \in K^{\ast}, \langle x,y\rangle = 0 \Rightarrow \langle L(x),y\rangle \leq 0.$$ 
The above notion is a generalization of the notion of $\mathcal{Z}$-matrices. Without 
getting into much of the literature on this, let us mention a recent result of 
Gowda {\it et al}. 

\begin{theorem}\label{gowda-song-kcs}
(\cite{gss}) For a proper cone $K$ in a finite dimensional real Hilbert space $V$, the 
following two statements for a linear transformation on $V$ are equivalent.
\begin{enumerate}
\item $L \in \mathcal{Z}(K) \cap \pi(K)$.
\item $L + tI \in Aut(K)$ for all $t > 0$.
\end{enumerate}
\end{theorem}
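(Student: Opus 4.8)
The plan is to use the exponential (semigroup) characterization of the $\mathcal{Z}$-property as the bridge between the two conditions. Recall that $L \in \mathcal{Z}(K)$ means precisely that $-L$ is \emph{cross-positive} with respect to $K$, i.e. $\langle (-L)x, y\rangle \geq 0$ whenever $x \in K$, $y \in K^{\ast}$, $\langle x,y\rangle = 0$. By the theorem of Schneider and Vidyasagar, cross-positivity of $-L$ is equivalent to the assertion that the semigroup $e^{-sL}$ leaves $K$ invariant: $e^{-sL}(K) \subseteq K$ for every $s \geq 0$. I would take this equivalence as the main engine of the proof, together with the Perron--Frobenius theory for cross-positive maps and the resolvent (Laplace transform) identity $(L+tI)^{-1} = \int_0^\infty e^{-st} e^{-sL}\,ds$, valid when the spectrum of $L+tI$ lies in the open right half plane.

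For the implication $(1) \Rightarrow (2)$, first note that $L \in \pi(K)$ gives $(L+tI)(K) \subseteq L(K) + tK \subseteq K$ for all $t > 0$, since $K$ is a convex cone. It remains to show that $L+tI$ is invertible with $(L+tI)^{-1}(K) \subseteq K$. The key step is to locate the spectrum of $L$: since $-L$ is cross-positive, the Perron--Frobenius theorem for cross-positive maps supplies a nonzero $v \in K$ with $-Lv = \beta v$, where $\beta = \max\{\operatorname{Re}\lambda : \lambda \in \operatorname{spec}(-L)\}$. Then $Lv = -\beta v \in K$ by $\pi(K)$, and pointedness of $K$ forces $-\beta \geq 0$; hence $\operatorname{Re}\lambda \geq 0$ for every eigenvalue $\lambda$ of $L$. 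Consequently, for each $t > 0$ every eigenvalue of $L+tI$ has real part at least $t > 0$, so $L+tI$ is invertible and the resolvent integral converges. Feeding $e^{-sL}(K)\subseteq K$ into $(L+tI)^{-1} = \int_0^\infty e^{-st}e^{-sL}\,ds$ and using that $K$ is closed and convex shows $(L+tI)^{-1}(K) \subseteq K$. Combined with $(L+tI)(K)\subseteq K$ this yields $K = (L+tI)\bigl((L+tI)^{-1}K\bigr) \subseteq (L+tI)(K) \subseteq K$, i.e. $L+tI \in Aut(K)$.

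For the converse $(2) \Rightarrow (1)$, the two halves of the automorphism condition feed the two halves of $(1)$. From $(L+tI)(K) \subseteq K$ for all $t>0$ and the closedness of $K$, letting $t \to 0^{+}$ gives $Lx \in K$ for every $x \in K$, so $L \in \pi(K)$. To recover the $\mathcal{Z}$-property I would use the inverse branch: the identity $e^{-sL} = \lim_{n\to\infty}\bigl[\,t_n (L+t_n I)^{-1}\bigr]^{n}$ with $t_n = n/s$ expresses the semigroup as a limit of nonnegative scalar multiples of powers of resolvents. Since each $(L+t_nI)^{-1}$ maps $K$ into $K$ (being an automorphism) and $K$ is closed, every $e^{-sL}$ maps $K$ into $K$; by the Schneider--Vidyasagar equivalence $-L$ is cross-positive, that is, $L \in \mathcal{Z}(K)$.

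The main obstacle, and the step deserving the most care, is the spectral localization in the forward direction: pinning down that every eigenvalue of $L$ has nonnegative real part, which is exactly what guarantees invertibility of $L+tI$ for all $t>0$ and convergence of the resolvent integral. This is where the hypotheses genuinely interact, as cross-positivity of $-L$ supplies an extremal eigenvector in $K$ while $\pi(K)$ together with pointedness converts this into the sign condition on the spectrum. A secondary technical point is to justify rigorously that $K$-invariance passes through the improper integral and through the limit of resolvent powers; both reduce to the fact that $K$, being closed and convex, is stable under integration and under limits of nonnegative combinations of its elements.
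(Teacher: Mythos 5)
The paper does not prove this theorem; it is quoted verbatim from \cite{gss}, so there is no internal proof to compare against. Your argument is correct and complete, and it is essentially the standard route (and the one underlying the cited source): the Schneider--Vidyasagar equivalence between cross-positivity of $-L$ and $e^{-sL}(K)\subseteq K$, the Perron--Frobenius theorem for cross-positive maps to pin the spectrum of $L$ in the closed right half-plane (via $Lv=-\beta v\in K$ and pointedness), the Laplace-transform identity for $(L+tI)^{-1}$ to get $(L+tI)^{-1}(K)\subseteq K$, and the Euler/Yosida approximation $e^{-sL}=\lim_n\bigl[t_n(L+t_nI)^{-1}\bigr]^n$ for the converse. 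Each of the external ingredients you invoke is a genuine theorem in the finite-dimensional setting, and the two technical points you flag (spectral localization; stability of the closed convex cone $K$ under the improper integral and under limits) are handled correctly, so I see no gap.
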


There are other statements equivalent to the ones stated above. These may be found in 
Theorem $2$ of \cite{gss}. It easily follows from the above theorem and Theorem 
\ref{gst-Shitov} that these notions - $\mathcal{Z}(K)$ and $\pi(K)$ - cannot co-exist 
in the case of $COP_n / CP_n$. For the sake of completeness, we present a proof.

\begin{theorem}\label{Z vs Nonnegativity}
For $n \leq 4$, let $L$ be a linear transformation on $\mathcal{S}^n$ such that 
$L(CP_n) \subseteq CP_n$. Then $L$ cannot have $\mathcal{Z}$-property relative to $CP_n$.
\end{theorem}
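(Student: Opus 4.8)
The plan is to contradict the coexistence of the two properties by combining the cited rigidity theorems. Since the hypothesis $L(CP_n)\subseteq CP_n$ is exactly the statement $L\in\pi(CP_n)$, it suffices to assume in addition that $L$ has the $\mathcal{Z}$-property and to derive a severe restriction on $L$. By Theorem \ref{gowda-song-kcs}, $L\in\mathcal{Z}(CP_n)\cap\pi(CP_n)$ forces $L+tI\in\mathrm{Aut}(CP_n)$ for every $t>0$; and by Theorem \ref{gst-Shitov} together with the remark that automorphisms of $CP_n$ are monomial congruences, each such map is of the form $(L+tI)(X)=M(t)\,X\,M(t)^{t}$ for a nonnegative monomial matrix $M(t)=D(t)P(t)$, with $D(t)$ a positive diagonal matrix and $P(t)$ a permutation matrix realizing a permutation $\sigma_t$. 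The whole affine ray $\{L+tI:t>0\}$ must therefore lie inside the set of monomial congruences, and the heart of the argument is to show that this is impossible unless $L$ is a scalar multiple of the identity.

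First I would extract the discrete data. Evaluating a monomial congruence on the matrix unit $E_{kk}$ gives $M(t)E_{kk}M(t)^{t}=d_{\sigma_t^{-1}(k)}(t)^2\,E_{\sigma_t^{-1}(k)\,\sigma_t^{-1}(k)}$, i.e.\ a nonnegative multiple of a single diagonal matrix unit. Matching this against $(L+tI)(E_{kk})=L(E_{kk})+tE_{kk}$ forces, for all but finitely many $t$, both $\sigma_t(k)=k$ and $L(E_{kk})=\alpha_k E_{kk}$ for a constant $\alpha_k$ (the off-diagonal entries of $L(E_{kk})$ must vanish, and the $(k,k)$ entry reads $d_k(t)^2=t+\alpha_k$). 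Thus the permutation is the identity for generic $t$ and $L$ is diagonal on the diagonal units. Next, evaluating on the symmetric units $E_{ij}+E_{ji}$ (which lie in $CP_n$) yields $L(E_{ij}+E_{ji})=\bigl(d_i(t)d_j(t)-t\bigr)(E_{ij}+E_{ji})$, and substituting $d_i(t)^2=t+\alpha_i$ makes $\sqrt{(t+\alpha_i)(t+\alpha_j)}-t$ independent of $t$, which forces $\alpha_i=\alpha_j$. Hence all $\alpha_k$ agree, and since the $E_{kk}$ and $E_{ij}+E_{ji}$ span $\mathcal{S}^n$, we conclude $L=\alpha I$ with $\alpha\ge 0$.

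The step I expect to be the main obstacle is not the computation but its interpretation, and this is where the statement must be read carefully. The map $\alpha I$ does formally satisfy the $\mathcal{Z}$-property, since $\langle \alpha x,y\rangle=\alpha\langle x,y\rangle=0$ whenever $x\in CP_n$, $y\in COP_n$ and $\langle x,y\rangle=0$; so the rigidity above is in fact the true content of the theorem, namely that the only elements of $\mathcal{Z}(CP_n)\cap\pi(CP_n)$ are the nonnegative scalar multiples of the identity. Such maps lie in the lineality of the defining inequality -- they are simultaneously $\mathcal{Z}$ and cross-positive -- and therefore carry no genuine $\mathcal{Z}$-content. I would accordingly phrase the conclusion as: apart from these degenerate scalar maps, no linear transformation preserving $CP_n$ can possess the $\mathcal{Z}$-property, so that for $n\le 4$ the properties $\mathcal{Z}(CP_n)$ and $\pi(CP_n)$ cannot coexist nontrivially. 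The two places demanding care are the uniqueness of $M(t)$ up to sign (so that $\sigma_t$ is well defined and invertibility keeps $D(t)$ strictly positive) and the elimination of nontrivial $\sigma_t$, both handled by the entrywise comparison on $E_{kk}$ described above.
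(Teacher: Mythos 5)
Your proposal uses the same two ingredients as the paper's own proof (Theorem \ref{gowda-song-kcs} to get $L+tI\in Aut(CP_n)$ for all $t>0$, and Theorem \ref{gst-Shitov} to write each $L+tI$ as a monomial congruence), but you carry the analysis substantially further and, in doing so, you expose a real problem with the theorem as stated. The paper's proof writes $(L+tI)(X)=MXM^t$ for a \emph{fixed} monomial $M$ and evaluates at $X=I$ to get $L(I)=MM^t-tI$ for all $t>0$, declaring this ``certainly not possible.'' But $M$ must be allowed to depend on $t$, and once it does, the contradiction evaporates: for $L=\alpha I$ with $\alpha\geq 0$ one takes $M(t)=\sqrt{t+\alpha}\,I$ and everything is consistent. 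Indeed $L=\alpha I$ (including $L=0$ and the identity map) preserves $CP_n$ and satisfies the $\mathcal{Z}$-property vacuously, since $\langle x,y\rangle=0$ gives $\langle \alpha x,y\rangle=0\leq 0$. So the theorem is false as literally stated, and your computation --- matching $(L+tI)(E_{kk})$ against $d_{\cdot}(t)^2 E_{\cdot\cdot}$ to force $\sigma_t=\mathrm{id}$ and $d_k(t)^2=t+\alpha_k$, then using $\sqrt{(t+\alpha_i)(t+\alpha_j)}-t=\mathrm{const}$ to force $\alpha_i=\alpha_j$ --- correctly identifies $\mathcal{Z}(CP_n)\cap\pi(CP_n)=\{\alpha I:\alpha\geq 0\}$ as the precise content. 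Your reformulation (``cannot coexist nontrivially'') is the statement the paper should have made.

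One small slip: $E_{ij}+E_{ji}$ is \emph{not} in $CP_n$ (it is not positive semidefinite), so your parenthetical is wrong; but this is harmless, because $(L+tI)(X)=M(t)XM(t)^t$ holds for all $X\in\mathcal{S}^n$ by linearity once it holds on the spanning cone $CP_n$, so you may evaluate on any symmetric matrix you like. Also, you do not actually need uniqueness of $M(t)$: for each $t$ you merely choose one monomial representative, and the entrywise comparison does the rest. With those two remarks, your argument is complete and is strictly stronger than the one in the paper.
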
 

\begin{proof}
Suppose $L \in \mathcal{Z}(CP_n) \cap \pi(CP_n)$. Then by Theorem \eqref{gowda-song-kcs}, 
$L +tI$ must be an automorphism of $CP_n$ for every $t > 0$. Thus, $(L + tI)(X) = MXM^t$ 
for some fixed nonnegative monomial matrix $M$. In particular, $L(I) = M^tM - tI \in CP_n 
= \mathcal{S}^n \cap \mathcal{N}^n_+$ for every $t > 0$, which is certainly not possible. 
This proves the result. 
\end{proof}

\subsection{General {\it into} preservers of $CP_2/COP_2$} \hspace*{\fill} \\
\label{sec-2.4}
Let us now try to answer Question \eqref{qn-1}. The general problem of determining the 
structure of {\it into} linear preservers of $COP_n$ or $CP_n$ is very hard to tackle. 
As pointed out in Section \ref{sec-1}, an answer in the $n=2$ can be 
obtained and this section is devoted to this. In the next section, we indicate a possible 
approach to determining the general structure. As pointed out in Lemma 
\ref{(K_1,K_2)-nonnegative}, it is enough to consider Question \eqref{qn-1} when $K = CP_2$. 
We also infer from Theorems \ref{starting-thm} and 
\ref{thm-nonneg-char} that the problem reduces to determining $\text{Sem}(CP_2)$. The first 
result in this connection is the following.

\begin{lemma}\label{lemma-1}
For $n \leq 4$, the following holds: $\text{Sem}(CP_n) = \text{Sem}(\mathcal{S}^n_+) \cap 
\text{Sem}(\mathcal{N}^n_+)$.
\end{lemma}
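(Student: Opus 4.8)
The plan is to prove the set equality as two inclusions, both resting on the interior identity $(CP_n)^{\circ}=(\mathcal{S}^n_+)^{\circ}\cap(\mathcal{N}^n_+)^{\circ}$. For $n\le 4$ we have $CP_n=\mathcal{S}^n_+\cap\mathcal{N}^n_+$, and these two closed convex cones share an interior point, since $I+J$ (with $I$ the identity and $J$ the all-ones matrix) is positive definite with strictly positive entries; as the interiors meet, the interior of the intersection equals the intersection of the interiors. The inclusion $\text{Sem}(CP_n)\subseteq\text{Sem}(\mathcal{S}^n_+)\cap\text{Sem}(\mathcal{N}^n_+)$ is then immediate: any $X\in(CP_n)^{\circ}$ with $L(X)\in(CP_n)^{\circ}$ serves, by the identity, as a common witness for semipositivity relative to each of $\mathcal{S}^n_+$ and $\mathcal{N}^n_+$.

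The reverse inclusion is the substantive one. Here one is given two \emph{a priori unrelated} witnesses---$X_1\succ 0$ with $L(X_1)\succ 0$, and an entrywise positive $X_2$ with $L(X_2)$ entrywise positive---and must produce a \emph{single} $X$ that is at once positive definite and entrywise positive with $L(X)$ of the same kind. A direct convex combination $sX_1+tX_2$ is hopeless: positive definiteness of the combination pushes $s/t$ large while entrywise positivity pushes $t/s$ large, and the identical tension recurs for its image; adding a fixed interior direction such as $I$ or $J$ fails because $L$ of that direction is uncontrolled. I would therefore pass to the dual side. By the standard separation/alternatives principle, for a proper cone $K$ one has $L\notin\text{Sem}(K)$ precisely when there is a nonzero $Y\in K^{\ast}$ with $-L^{\ast}Y\in K^{\ast}$, where $L^{\ast}$ is the trace-inner-product adjoint. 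Using the self-duality of $\mathcal{S}^n_+$ and $\mathcal{N}^n_+$ and the duality $(CP_n)^{\ast}=COP_n$, the contrapositive of the desired inclusion becomes a clean statement about $M:=-L^{\ast}$: if $M$ carries some nonzero copositive matrix into $COP_n$, then $M$ carries some nonzero positive semidefinite matrix into $\mathcal{S}^n_+$, or some nonzero entrywise nonnegative matrix into $\mathcal{N}^n_+$.

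To prove this I would use the decomposition $COP_n=\mathcal{S}^n_++\mathcal{N}^n_+$, valid exactly for $n\le 4$. The set $\Gamma=COP_n\cap M^{-1}(COP_n)$ is a nonzero closed pointed convex cone, hence is spanned by its extreme rays, and I would select an extreme generator $Y^{\ast}$ and study its splitting $Y^{\ast}=P+N$ with $P\in\mathcal{S}^n_+$, $N\in\mathcal{N}^n_+$, alongside the forced splitting of $MY^{\ast}\in\mathcal{S}^n_++\mathcal{N}^n_+$. \textbf{The main obstacle lies exactly here}: extremality of $Y^{\ast}$ within $\Gamma$ does not by itself confine $Y^{\ast}$, or its image, to a single summand, so the crux is to upgrade the separating copositive functional to one that, together with its $M$-image, respects one and the same cone $\mathcal{S}^n_+$ or $\mathcal{N}^n_+$. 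That this can be arranged hinges on the summand decomposition, and hence on $n\le 4$; the restriction is genuine, since for general proper cones the analogue fails---a planar rotation through $45^{\circ}$ is semipositive for each of the sectors $[0^{\circ},60^{\circ}]$ and $[30^{\circ},90^{\circ}]$ yet not for their intersection---so no purely formal argument about intersections can succeed.
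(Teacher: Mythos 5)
Your proof of the inclusion $\text{Sem}(CP_n)\subseteq\text{Sem}(\mathcal{S}^n_+)\cap\text{Sem}(\mathcal{N}^n_+)$ is correct and is exactly the paper's argument: a single interior witness for $CP_n=\mathcal{S}^n_+\cap\mathcal{N}^n_+$ is, via $(CP_n)^{\circ}=(\mathcal{S}^n_+)^{\circ}\cap(\mathcal{N}^n_+)^{\circ}$, a common witness for both cones. But your treatment of the reverse inclusion is not a proof. You pass to the dual, form the cone $\Gamma=COP_n\cap M^{-1}(COP_n)$ with $M=-L^{\ast}$, pick an extreme generator $Y^{\ast}$, and then state that ``the crux is to upgrade the separating copositive functional to one that, together with its $M$-image, respects one and the same cone'' --- and you stop there. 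That upgrade is the entire content of the hard direction; nothing in the decomposition $COP_n=\mathcal{S}^n_++\mathcal{N}^n_+$ obviously forces an extreme ray of $\Gamma$ (a cone cut out by two conditions, not one) to lie in a single summand, let alone to have its image land in the matching summand. As written, the reverse inclusion remains unproved, so the proposal is incomplete.

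That said, your diagnosis of \emph{where} the difficulty sits is more honest than the paper's own treatment. The paper disposes of the reverse inclusion with ``the other inclusion can be proved similarly,'' but it is not similar: the forward direction hands you one witness that serves two cones, while the reverse direction hands you two unrelated witnesses and demands one. Your rotation-of-sectors example (a $45^{\circ}$ rotation is semipositive for each of the sectors $[0^{\circ},60^{\circ}]$ and $[30^{\circ},90^{\circ}]$ but not for their intersection $[30^{\circ},60^{\circ}]$) shows that $\text{Sem}(K_1)\cap\text{Sem}(K_2)\subseteq\text{Sem}(K_1\cap K_2)$ fails for general proper cones, so any correct proof must exploit specific structure of $\mathcal{S}^n_+$ and $\mathcal{N}^n_+$, and the paper supplies none. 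In short: you have correctly identified a genuine gap in the result's justification, but you have not closed it; to be acceptable, your write-up must either complete the dual argument (prove the upgrade step) or exhibit the common interior witness directly, and until then the equality should be regarded as unestablished rather than ``proved similarly.''
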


\begin{proof}
Let $L \in \text{Sem}(CP_n)$. Then, there exists $A \in (CP_n)^{\circ}$ such that 
$L(A) \in (CP_n)^{\circ}$. Since $CP_n = \mathcal{S}^n_+ \cap \mathcal{N}^n_+$, we 
see that $(CP_n)^{\circ} = (\mathcal{S}^n_+)^{\circ} \cap (\mathcal{N}^n_+)^{\circ}$. 
Thus, the matrix $A \in (\mathcal{S}^n_+)^{\circ}$ such that $L(A) \in 
(\mathcal{S}^n_+)^{\circ}$ as well as $A \in (\mathcal{N}^n_+)^{\circ}$ with 
$L(A) \in (\mathcal{N}^n_+)^{\circ}$. This implies that $\text{Sem}(CP_n) \subseteq 
\text{Sem}(\mathcal{S}^n_+) \cap \text{Sem}(\mathcal{N}^n_+)$. This proves one inclusion. 
The other inclusion can be proved similarly.
\end{proof}

\medskip

Let us now determine $\text{Sem}(\mathcal{N}^n_+)$. Notice that $\mathcal{N}^n_+$ is a 
polyhedral cone as it is isomorphic to $\mathbb{R}^{n(n+1)/2}_+$. 

\begin{lemma}\label{lemma-2}
Let $K_1$ and $K_2$ be proper cones in finite dimensional real Hilbert spaces $V_1$ 
and $V_2$ that are isomorphic through a map $T$; that is $T: V_1 \rightarrow V_2$ is 
an invertible linear map such that $T(K_1) = K_2$. Then, 
\begin{itemize}
\item $\pi(K_1) = T^{-1} \pi(K_2) T$.
\item $\text{Sem}(K_1) = T^{-1} \text{Sem}(K_2) T$.
\end{itemize}
\end{lemma}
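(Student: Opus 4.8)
The plan is to prove both statements by a direct computation that transports the defining conditions of $\pi(K_i)$ and $\text{Sem}(K_i)$ through the isomorphism $T$. The essential observation is that $T$ being an invertible linear map with $T(K_1) = K_2$ automatically gives $T(K_1^{\circ}) = K_2^{\circ}$, since a linear homeomorphism carries the topological interior of a set onto the interior of its image. I would record this interior-correspondence as the first step, as both parts depend on it.

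For the first bullet, I would argue by a chain of equivalences. Let $L$ be a linear map on $V_1$. Then $L \in \pi(K_1)$ means $L(K_1) \subseteq K_1$. I would show this is equivalent to $(TLT^{-1})(K_2) \subseteq K_2$, i.e. $TLT^{-1} \in \pi(K_2)$, by applying $T$ to both sides and using $T(K_1) = K_2$ together with invertibility of $T$. Concretely, $L(K_1) \subseteq K_1$ iff $T L (K_1) \subseteq T(K_1) = K_2$ iff $T L T^{-1}(K_2) \subseteq K_2$, where the last step substitutes $K_1 = T^{-1}(K_2)$. Rearranging $TLT^{-1} \in \pi(K_2)$ gives $L \in T^{-1}\pi(K_2)T$, which is exactly the claimed set equality $\pi(K_1) = T^{-1}\pi(K_2)T$.

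For the second bullet, I would run the analogous equivalence but now at the level of a single witnessing point. By definition $L \in \text{Sem}(K_1)$ means there exists $x \in K_1^{\circ}$ with $L(x) \in K_1^{\circ}$. Setting $y = T(x)$, the interior correspondence from the first step gives $y \in K_2^{\circ}$, and I would check that $L(x) \in K_1^{\circ}$ translates to $(TLT^{-1})(y) = T(L(x)) \in T(K_1^{\circ}) = K_2^{\circ}$. Thus the existence of a semipositivity witness for $L$ relative to $K_1$ is equivalent to the existence of one for $TLT^{-1}$ relative to $K_2$, giving $L \in \text{Sem}(K_1)$ iff $TLT^{-1} \in \text{Sem}(K_2)$, and hence $\text{Sem}(K_1) = T^{-1}\text{Sem}(K_2)T$.

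I do not anticipate a genuine obstacle here; the statement is essentially a conjugation-invariance lemma and the only point requiring a moment's care is the interior correspondence $T(K_i^{\circ}) = (T(K_i))^{\circ}$, which I would justify by noting $T$ and $T^{-1}$ are continuous and open. One should also verify that $T^{-1}\pi(K_2)T$ and $T^{-1}\text{Sem}(K_2)T$ are the intended sets of operators on $V_1$ — that is, that conjugation by $T$ is the correct bijection between operators on $V_2$ and operators on $V_1$ — but this is immediate since $TLT^{-1}$ maps $V_2$ to $V_2$ precisely when $L$ maps $V_1$ to $V_1$.
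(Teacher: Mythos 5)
Your proof is correct; the paper actually states Lemma \ref{lemma-2} with no proof at all, treating it as an evident conjugation-invariance fact. Your chain of equivalences for $\pi$ and your use of the interior correspondence $T(K_1^{\circ}) = (T(K_1))^{\circ} = K_2^{\circ}$ for $\text{Sem}$ supply exactly the routine verification the authors omitted, with no gaps.
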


\medskip

Thus, to determine $\text{Sem}(\mathcal{N}^n_+)$, it suffices to determine 
$\text{Sem}(\mathbb{R}^{n(n+1)/2}_+)$. Elements of the set 
$\text{Sem}(\mathbb{R}^{n(n+1)/2}_+)$ can be characterized as $YX^{-1}$ for some 
(entrywise) positive matrices $X$ and $Y$ with $X$ invertible 
(see Theorem $2.3$, \cite{csv-1}). We thus have the following:

\begin{lemma}\label{sem-1}
The set $\text{Sem}(\mathcal{N}^n_+)$ can be determined as the set of all maps on 
$\mathcal{S}^n$ of the form $T^{-1} (YX^{-1}) T$, where $X \ \text{and} \ Y$ are entrywise positive matrices in $\mathbb{R}^{n(n+1)/2}$ with $X$ invertible and $T$ is the isomorphism 
between $\mathcal{N}^n_+$ and $\mathbb{R}^{n(n+1)/2}_+$.
\end{lemma}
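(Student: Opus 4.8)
The plan is to obtain Lemma \ref{sem-1} as a direct consequence of the two ingredients assembled immediately above it: the transfer principle for semipositive matrices (the second bullet of Lemma \ref{lemma-2}) together with the explicit description of $\text{Sem}(\mathbb{R}^{n(n+1)/2}_+)$ furnished by Theorem $2.3$ of \cite{csv-1}.

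First I would pin down the isomorphism $T$. A symmetric matrix $X = (x_{ij}) \in \mathcal{S}^n$ is completely determined by its $n(n+1)/2$ entries on and above the diagonal, so the coordinate map $T : \mathcal{S}^n \to \mathbb{R}^{n(n+1)/2}$ sending $X$ to the vector $(x_{ij})_{i \le j}$ is a linear isomorphism. Since membership in $\mathcal{N}^n_+$ amounts precisely to the nonnegativity of these entries, one has $T(\mathcal{N}^n_+) = \mathbb{R}^{n(n+1)/2}_+$, so $T$ is a genuine isomorphism of proper cones. Note that $T$ need not be an isometry for the trace inner product, but this is irrelevant: both $\pi$ and $\text{Sem}$ are defined purely through the cone structure and its interior, so Lemma \ref{lemma-2} applies verbatim.

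With $K_1 = \mathcal{N}^n_+$ and $K_2 = \mathbb{R}^{n(n+1)/2}_+$, the second bullet of Lemma \ref{lemma-2} then gives
$$\text{Sem}(\mathcal{N}^n_+) = T^{-1}\,\text{Sem}(\mathbb{R}^{n(n+1)/2}_+)\,T.$$
I would finish by substituting the characterization $\text{Sem}(\mathbb{R}^{n(n+1)/2}_+) = \{\, YX^{-1} : X, Y \text{ entrywise positive}, \ X \text{ invertible} \,\}$ from Theorem $2.3$ of \cite{csv-1} into the right-hand side; because conjugation $S \mapsto T^{-1}ST$ is a bijection, the image of this set is exactly $\{\, T^{-1}(YX^{-1})T \,\}$, which is the asserted description.

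There is no genuine obstacle here — the argument is a two-step substitution. The only points demanding care are verifying that the coordinate map $T$ really is a cone isomorphism (so that Lemma \ref{lemma-2} is legitimately invoked), and observing that Theorem $2.3$ of \cite{csv-1} delivers a full \emph{characterization} of $\text{Sem}(\mathbb{R}^{n(n+1)/2}_+)$, not merely a one-sided containment; this is what upgrades the transported statement from an inclusion to the claimed equality.
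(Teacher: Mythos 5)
Your proposal is correct and follows exactly the same route as the paper's own proof: invoke the second bullet of Lemma \ref{lemma-2} with the coordinate isomorphism $T$ between $\mathcal{N}^n_+$ and $\mathbb{R}^{n(n+1)/2}_+$, then substitute the $YX^{-1}$ characterization of semipositive matrices over the nonnegative orthant from Theorem $2.3$ of \cite{csv-1}. Your version is in fact slightly more careful, since you verify explicitly that $T$ is a cone isomorphism and note that the inner product plays no role.
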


\begin{proof}
There is a natural isomorphism $T$ between $\mathcal{N}^n_+$ and $\mathbb{R}^{n(n+1)/2}$. 
The proof now follows from the second statement of Lemma \ref{lemma-2} and the fact 
that semipositive maps over the nonnegative orthant of $\mathbb{R}^k$ are of the form 
$YX^{-1}$ for entrywise positive matrices with $X$ invertible.
\end{proof}

\medskip

Let us now determine $\text{Sem}(\mathcal{S}^2_+)$. Although Theorem $2.3$ of 
\cite{csv-1} gives a characterization of semipositive maps over all proper cones, 
determining the structure of positive maps over $\mathcal{S}^n_+$ (and possibly 
many other proper cones) is extremely challenging. In fact, the answer is not known for 
$\mathcal{S}^n_+$. This justifies our focus on the $n=2$ case. We will use 
Theorem \ref{cone-nonnegative-sp} here. Recall that the Lorentz cone $\mathcal{L}^3_+$ is 
isomorphic to $\mathcal{S}^2_+$ via the map $(x_1,x_2,x_3)^t \mapsto \begin{bmatrix}
									  								   x_3-x_1 & x_2\\
									  								   x_2 & x_3+x_1
									  								   \end{bmatrix}$. 
We therefore have $\text{Sem}(\mathcal{S}^2_+) = T_1^{-1} S(\mathcal{L}^3_+) T_1$, where 
$T_1$ is the above isomorphism from the Lorentz cone to $\mathcal{S}^2_+$. Thus, 
the problem now reduces to determining semipositive maps over the Lorentz cone.

\medskip

Theorem \ref{cone-nonnegative-sp} says that if $A$ is $\mathcal{L}^3_+$-semipositive, 
then there exists a semipositive matrix $B$ (with respect to the usual nonnegative orthant) 
such that $A = (S^t)^{-1} B (S^t)$, where $S \in \pi(\mathbb{R}^3_+, \mathcal{L}^3_+)$ is an 
invertible map. For instance, one can take $S$ to be the map 
$(x_1,x_2,x_3)^t \mapsto (x_1,x_2,x_1+x_2+x_3)^t$. We thus have the following:

\begin{lemma}\label{sem-2}
The set $\text{Sem}(\mathcal{S}^2_+)$ can be determined as the set of all maps on 
$\mathcal{S}^2$ of the form $(S^t T_1^{-1})^{-1} (YX^{-1}) (S^t T_1^{-1})$, where 
$X \ \text{and} \ Y$ are $3 \times 3$ entrywise positive matrices with $X$ invertible, 
$T_1$ is the isomorphism between $\mathcal{S}^2_+$ and $\mathcal{L}^3_+$ and $S$ is 
the invertible map described above.
\end{lemma}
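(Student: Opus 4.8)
The plan is to determine $\text{Sem}(\mathcal{S}^2_+)$ by transporting the problem, in two stages, down to the nonnegative orthant $\mathbb{R}^3_+$, where the semipositive maps are already described as products $YX^{-1}$. The backbone of the argument is the chain of cone identifications $\mathcal{S}^2_+ \xleftarrow{T_1} \mathcal{L}^3_+ \xleftarrow{S} \mathbb{R}^3_+$, combined with the two conjugation principles already at our disposal: the isomorphism formula of Lemma \ref{lemma-2} and the $\pi$-conjugation of Theorem \ref{cone-nonnegative-sp}. The asserted form $(S^t T_1^{-1})^{-1}(YX^{-1})(S^t T_1^{-1})$ should then emerge purely by composing these two conjugations and inserting the known parametrization of $\text{Sem}(\mathbb{R}^3_+)$.

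First I would apply Lemma \ref{lemma-2} to the cone isomorphism $T_1$, which carries $\mathcal{L}^3_+$ onto $\mathcal{S}^2_+$, to obtain $\text{Sem}(\mathcal{S}^2_+) = T_1\,\text{Sem}(\mathcal{L}^3_+)\,T_1^{-1}$; here one must keep careful track of the direction of $T_1$ so that the conjugating factors land on the correct sides. Next I would use the converse half of Theorem \ref{cone-nonnegative-sp}, which is applicable because $\mathbb{R}^3_+$ and $\mathcal{L}^3_+$ are self-dual and $S \in \pi(\mathbb{R}^3_+,\mathcal{L}^3_+)$ is the displayed invertible map: this writes every $\mathcal{L}^3_+$-semipositive map as $(S^t)^{-1} A\, S^t$ for some $A \in \text{Sem}(\mathbb{R}^3_+)$. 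Finally, by Theorem $2.3$ of \cite{csv-1} (the same fact recorded in the proof of Lemma \ref{sem-1}) I would substitute $A = YX^{-1}$ with $X, Y$ entrywise positive and $X$ invertible. Composing gives the maps $T_1 (S^t)^{-1}(YX^{-1}) S^t T_1^{-1}$, and the elementary identity $T_1 (S^t)^{-1} = (S^t T_1^{-1})^{-1}$ rewrites this in the claimed form.

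The routine part is the bookkeeping of composing these conjugations; the delicate part is promoting the inclusion to the equality asserted in the statement. The cited theorems give directly that every $\mathcal{S}^2_+$-semipositive map has the stated form, but the reverse containment — that each map $(S^t T_1^{-1})^{-1}(YX^{-1})(S^t T_1^{-1})$ is genuinely semipositive — is subtler, because, by Lemma \ref{(K_1,K_2)-nonnegative}, $S^t$ sends $\mathcal{L}^3_+$ \emph{into} but not \emph{onto} $\mathbb{R}^3_+$, so a semipositivity witness for a given $A$ need not pull back to an interior point of $\mathcal{L}^3_+$. The hard part will therefore be to verify that interior points are carried to interior points at every stage: that $T_1$ and $T_1^{-1}$ preserve cone interiors (immediate, since $T_1$ is a linear homeomorphism taking $\mathcal{L}^3_+$ onto $\mathcal{S}^2_+$), and that the witness survives transport under $S^t$ and $(S^t)^{-1}$. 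I expect this to reduce, for the explicit $S$ given, to a direct check that a suitable interior witness can always be selected, thereby upgrading the inclusion to the claimed equality.
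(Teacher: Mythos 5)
Your main line is exactly the paper's proof: conjugate by $T_1$ via Lemma \ref{lemma-2} to pass from $\mathcal{S}^2_+$ to $\mathcal{L}^3_+$, apply the converse half of Theorem \ref{cone-nonnegative-sp} with the explicit $S \in \pi(\mathbb{R}^3_+,\mathcal{L}^3_+)$ to pass to $\mathbb{R}^3_+$, and then substitute the parametrization of $\text{Sem}(\mathbb{R}^3_+)$ by $YX^{-1}$; the identity $T_1(S^t)^{-1} = (S^tT_1^{-1})^{-1}$ is the same bookkeeping the paper performs. Where you genuinely depart from the paper is in observing that the cited results only deliver the inclusion $\text{Sem}(\mathcal{S}^2_+) \subseteq \{(S^tT_1^{-1})^{-1}(YX^{-1})(S^tT_1^{-1})\}$, whereas the paper's proof asserts equality without comment. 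That observation is correct and important.

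However, the final step you propose --- selecting ``a suitable interior witness'' to upgrade the inclusion to the claimed equality --- cannot be carried out, because the reverse inclusion is false. Conjugation by $S^t$ identifies $\text{Sem}(\mathcal{L}^3_+)$ with $\text{Sem}(K')$, where $K' = S^t(\mathcal{L}^3_+)$ is a circular cone properly contained in $\mathbb{R}^3_+$, and $\text{Sem}(K')$ is strictly smaller than $\text{Sem}(\mathbb{R}^3_+)$. Concretely, a point $(a,b,c)^t$ lies in $K'$ iff $c \geq 0$ and $c^2 \geq (a-c)^2+(b-c)^2$, so $u=(1,\epsilon,\epsilon)^t$ with $0<\epsilon<1/2$ is entrywise positive but outside $K'$. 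Taking $Y = uv^t$ with $v$ entrywise positive and $X$ any entrywise positive invertible matrix, the map $B = YX^{-1}$ is $\mathbb{R}^3_+$-semipositive (witness: the first column of $X$), yet its range is the line through $u$, which misses $(K')^{\circ}$; hence $(S^t)^{-1}BS^t \notin \text{Sem}(\mathcal{L}^3_+)$ and the corresponding map on $\mathcal{S}^2$ is not in $\text{Sem}(\mathcal{S}^2_+)$. So the gap you flagged is real and not closable as stated; it is inherited from the paper's own proof, and only the forward inclusion of Lemma \ref{sem-2} is actually established by either argument.
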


\begin{proof}
The preceding discussion says that 
$\text{Sem}(\mathcal{S}^2_+) = \{T_1 A T_1^{-1}: A \in \text{Sem}(\mathcal{L}^3_+)\}$, 
which is equal to 
$\{(T_1^{-1})^{-1} (S^t)^{-1} B S^t T_1^{-1}: B \in \text{Sem}(\mathbb{R}^3_+)\}$. The 
proof follows since $B = YX^{-1}$ for entrywise positive matrices $X$ and $Y$.
\end{proof}

\medskip

The following theorem gives the structure of an invertible linear map on $\mathcal{S}^2$ 
that leaves invariant the cone $CP_2$.

\begin{theorem}\label{structure-thm}
An invertible linear map $L$ satisfies $L(CP_2) \subset CP_2$ if and only if for every 
pair $(X,Y)$ of $3 \times 3$ entrywise positive matrices with $X$ invertible, there exists 
another pair $(X_1,Y_1)$ of $3 \times 3$ entrywise positive matrices with $X_1$ invertible 
such that $L =  T^{-1}(Y_1X_1^{-1} - YX^{-1})T$, where $T$ is the isomorphism 
between $\mathcal{N}^2_+$ and $\mathbb{R}^{3}_+$.
\end{theorem}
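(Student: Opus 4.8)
The plan is to run the argument through Theorem \ref{thm-nonneg-char}, which converts the containment $L(CP_2)\subset CP_2$, that is $L\in\pi(CP_2)$, into the single statement that $L+B\in\text{Sem}(CP_2)$ for every $B\in\text{Sem}(CP_2)$. Since the asserted form of $L$ is phrased purely in terms of the explicit parametrizations of semipositive maps, the proof should reduce to feeding the descriptions of $\text{Sem}(CP_2)$ supplied by Lemmas \ref{lemma-1}, \ref{sem-1} and \ref{sem-2} into this reformulation and reading off the resulting identity for $L$.

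First I would record the reduction. By Lemma \ref{(K_1,K_2)-nonnegative} it suffices to treat $K=CP_2$, and by Theorem \ref{thm-nonneg-char} with $K_1=K_2=CP_2$ the condition $L\in\pi(CP_2)$ is equivalent to $L+B\in\text{Sem}(CP_2)$ for all $B\in\text{Sem}(CP_2)$. I would then apply Lemma \ref{lemma-1} to write $\text{Sem}(CP_2)=\text{Sem}(\mathcal{S}^2_+)\cap\text{Sem}(\mathcal{N}^2_+)$, so that both $B$ and $L+B$ are constrained to lie simultaneously in the two explicitly described families of Lemmas \ref{sem-1} and \ref{sem-2}.

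For the forward direction I would extract the $\mathcal{N}^2_+$-component of the equivalence. By Lemma \ref{sem-1} every element of $\text{Sem}(\mathcal{N}^2_+)$ has the shape $T^{-1}(YX^{-1})T$ for an entrywise positive pair $(X,Y)$ with $X$ invertible, so the requirement that $L+B$ again belong to this family produces a second such pair $(X_1,Y_1)$ with $L+B=T^{-1}(Y_1X_1^{-1})T$; solving for $L$ gives the asserted identity $L=T^{-1}(Y_1X_1^{-1}-YX^{-1})T$. Conjugating by $T$ restates it as $TLT^{-1}+YX^{-1}=Y_1X_1^{-1}$ in $M_3(\mathbb{R})$, and as $(X,Y)$ sweeps out all admissible pairs the term $YX^{-1}$ sweeps out $\text{Sem}(\mathbb{R}^3_+)$. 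The converse would then conjugate the assumed representation by $T$ and apply Theorem \ref{thm-nonneg-char} over $\mathbb{R}^3_+$, together with Lemma \ref{lemma-2}, to read off the corresponding nonnegativity of $L$.

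The main obstacle is reconciling this $\mathcal{N}^2_+$-picture with the full cone $CP_2=\mathcal{S}^2_+\cap\mathcal{N}^2_+$ dictated by Lemma \ref{lemma-1}. On its own the displayed identity only records that $L$ plus an arbitrary $\mathcal{N}^2_+$-semipositive map stays $\mathcal{N}^2_+$-semipositive, and this governs the nonnegativity part of $L$ while saying nothing about the positive semidefinite part; for example an invertible $L$ may preserve $\mathcal{N}^2_+$ yet carry $I\in CP_2$ out of $\mathcal{S}^2_+$, so the two conditions are genuinely independent. To pin down $L(CP_2)\subset CP_2$ I expect one must run the companion condition from Lemma \ref{sem-2}, expressed through the isomorphism $S^tT_1^{-1}$, alongside the $\mathcal{N}^2_+$ condition and verify that a single $L$ can meet both; showing that these two parametrizations are compatible for a common map, and that their conjunction is faithfully captured by the stated identity, is the delicate step. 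The surrounding manipulations, namely inverting the fixed isomorphisms $T$, $T_1$, $S$ and bookkeeping the positive pairs, are routine once this compatibility is settled.
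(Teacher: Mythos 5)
Your overall strategy is the paper's: reduce via Lemma \ref{(K_1,K_2)-nonnegative} to $K=CP_2$, invoke Theorem \ref{thm-nonneg-char} to convert $L\in\pi(CP_2)$ into ``$L+B\in\text{Sem}(CP_2)$ for all $B\in\text{Sem}(CP_2)$'', and then substitute the parametrization $T^{-1}(YX^{-1})T$ of semipositive maps. But your proposal stops exactly at the step that makes the theorem's statement come out in terms of $T$ alone, and you explicitly leave it unresolved. The paper closes this gap with one further claim: $\text{Sem}(\mathcal{N}^2_+)\subset\text{Sem}(\mathcal{S}^2_+)$, whence (by Lemma \ref{lemma-1}) $\text{Sem}(CP_2)=\text{Sem}(\mathcal{N}^2_+)$. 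Once $\text{Sem}(CP_2)$ is identified with the single family $\{T^{-1}(YX^{-1})T\}$, the quantifier ``for every $B\in\text{Sem}(CP_2)$'' becomes exactly ``for every admissible pair $(X,Y)$'', and both directions of the equivalence are immediate; the companion parametrization of Lemma \ref{sem-2} through $S^tT_1^{-1}$ never has to be reconciled with the $\mathcal{N}^2_+$ picture at all. Without this identification your forward direction is also quantified over the wrong set: the theorem asserts the existence of $(X_1,Y_1)$ for \emph{every} pair $(X,Y)$, i.e.\ for every $B\in\text{Sem}(\mathcal{N}^2_+)$, whereas Theorem \ref{thm-nonneg-char} only gives you control over $B\in\text{Sem}(CP_2)$; if the latter were a proper subset of the former, the conclusion would not follow.

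That said, your instinct that this is ``the delicate step'' is sound: the inclusion $\text{Sem}(\mathcal{N}^2_+)\subset\text{Sem}(\mathcal{S}^2_+)$ is asserted in the paper without proof, and it is not obvious --- a map can send some entrywise positive symmetric matrix to an entrywise positive one without sending any positive definite matrix to a positive definite one, since the interiors of $\mathcal{N}^2_+$ and $\mathcal{S}^2_+$ are not nested. (The same concern applies to the reverse inclusion in Lemma \ref{lemma-1}, where the two semipositivity witnesses need not coincide.) So the concrete gap in your write-up is the missing identification $\text{Sem}(CP_2)=\text{Sem}(\mathcal{N}^2_+)$: you must either prove the paper's inclusion or find another route to collapse the intersection of the two semipositivity classes to the $\mathcal{N}^2_+$ family. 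As it stands, your argument establishes only the $\mathcal{N}^2_+$-component of the equivalence and defers the decisive compatibility to future work, so it does not yet prove the theorem.
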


\begin{proof}
When $n = 2$, the cone $\mathcal{N}^2_+$ is isomorphic to $\mathbb{R}^3_+$. Let 
us assume that this isomorphism is given by the map $T$ (which is the same as the one 
that appears in Lemma \ref{sem-1}. Therefore, given any pair $(X,Y)$ of $3 \times 3$ 
entrywise positive matrices with $X$ inertible, one can generate the elements of 
$\text{Sem}(\mathcal{N}^2_+)$ and $\text{Sem}(\mathcal{S}^2_+)$. Notice that 
$\text{Sem}(\mathcal{N}^2_+) \subset \text{Sem}(\mathcal{S}^2_+)$ and therefore 
$\text{Sem}(CP_2) = \text{Sem}(\mathcal{N}^2_+)$. Then $L(CP_2) \subset CP_2$ if and 
only if for every $L_1 \in \text{Sem}(CP_2)$, the map $L + L_1 \in \text{Sem}(CP_2)$ 
(by Theorem \ref{(K_1,K_2)-nonnegative}). This is equivalent to saying that for every 
$CP_2$-semipositive map $L_1$, the map $T(L + L_1)T^{-1}$ is a $3 \times 3$ semipositive 
matrix with respect to $\mathbb{R}^3_+$. We thus conclude that 
$T(L + L_1)T^{-1} = Y_1X_1^{-1}$ for some $3 \times 3$ entrywise positive matrices $X_1$ 
and $Y_1$, with $X_1$ invertible. Combining this with Lemma \ref{sem-1}, we complete 
the proof.
\end{proof}

\medskip

A few remarks are in order.
\begin{remark}
\
\begin{itemize}
\item The proof of Theorem \ref{structure-thm} very much depends on $n=2$, as 
determining $\text{Sem}(\mathcal{S}^n_+)$ involves the knowledge of maps that leave 
$\mathcal{S}^n_+$ invariant. (More on this will be discussed in the next section).
\item The matrices $X_1$ and $Y_1$ depend on the choice of the matrices $X$ and $Y$. 
In particular, if $X$ is a $3 \times 3$ invertible positive matrix, taking  $Y = X$ we 
deduce that $L$ has the form $L = T^{-1}(Y_1X_1^{-1} - I)T$ for some pair $(X_1,Y_1)$ of 
$3 \times 3$ entrywise positive matrices with $X_1$ invertible.
\end{itemize}
\end{remark}

\medskip

Notice that the $3 \times 3$ matrix $Y_1X_1^{-1} - YX^{-1}$ cannot be a non-positive 
scalar matrix, as $L$ preserves $CP_2$. Therefore, from Theorem $5.2$ of \cite{dgjt}, 
we infer that it is similar to a semipositive matrix, say $B_{X,Y}$. Thus, 
$L = T^{-1}W^{-1}B_{X,Y}WT$ for some invertible matrix $W$. It is possible to simplify 
this expression further, as given below. The $3 \times 3$ matrix $B_{X,Y}$ is orthogonally 
similar to 
\begin{itemize}
\item a matrix $C_{X,Y}$ that is a sum of a skew-symmetric matrix and a diagogal matrix.
\item a matrix $\widetilde{D}_{X,Y}$ that is a product of an orthogonal matrix and a 
diagonal matrix.
\end{itemize}

A proof of these statements may be found in Lemma $9$ of \cite{vermeer}. With these, it 
is now possible to reduce further the expression of $L$ from Theorem \ref{structure-thm}.

\medskip

Let us end this section by noticing that a similar representation holds for a linear 
map on $\mathcal{S}^2$ such that $L(\mathcal{S}^2_+) \subset \mathcal{S}^2_+$. The only 
difference is that instead of an isomorphism $T$ between $\mathcal{N}^2_+$ and 
$\mathbb{R}^3_+$, we have to take the map $(S^t)T_1$ as given in Lemma \ref{sem-2}.

\subsection{The dual of $\pi(K)$} \hspace*{\fill} \\
\label{sec-2.5}

If $K$ is a proper cone, then so are its dual and $\pi(K)$. Therefore, $\pi(K)^{\ast}$ 
is a proper cone as well. This suggests that if one can determine $\pi(K)^{\ast}$, 
then one can determine its dual, namely, $\pi(K)$, as well. Besides, 
the dual of $\pi(K)$ involves the boundary/facial structure of the cone $K$. 
For a proper cone $K$, let $K^{\ast} \bigotimes K$ be the set 
$\{k \otimes \ell: \  k \in K^{\ast}, \ell \in K\}$, where 
$k \otimes \ell$ is the operator $y \mapsto \langle \ell,y \rangle k$. It is well known 
that for a proper cone $K$, $\pi(K)^{\ast} = \text{cone}\{K^{\ast} \bigotimes K\}$ 
(see for instance the explanation in \cite{orlitzky} and the references cited therein). 
We shall describe below a possible way to determine $\pi(K)^{\ast}$, when $K$ is 
the cone $\mathcal{CP}_2$. Since $\mathcal{CP}_2 = \mathcal{N}^2_+ \cap \mathcal{S}^2_+$, 
we see that $\pi(\mathcal{N}^2_+) \cap \pi(\mathcal{S}^2_+) \subseteq \pi(\mathcal{CP}_2)$. 
Therefore, $\pi(\mathcal{CP}_2)^{\ast} \subseteq \Big\{\pi(\mathcal{N}^2_+) \cap 
\pi(\mathcal{S}^2_+)\Big\}^{\ast}$. As pointed out in \ref{basic-facts-cones}, the right 
hand side of the above containment equals 
$\text{closure}\Big\{\pi(\mathcal{N}^2_+)^{\ast} + \pi(\mathcal{S}^2_+)^{\ast} \Big\}$. 
Let us now determine each of these two objects. 

\begin{theorem}\label{thm-2.5.1}
$\pi(\mathcal{N}^n_+)^{\ast}$ is the self-dual cone of all $n(n+1)/2 \times n(n+1)/2$ 
nonnegative matrices (with respect to the trace inner product).
\end{theorem}

\begin{proof}
Recall that $\mathcal{N}^n_+$ is isomorphic to the nonnegative orthant in 
$\mathbb{R}^{n(n+1)/2}$. One can now use the representation of $\pi(\mathcal{N}^n_+)^{\ast}$ 
to deduce that $\pi(\mathcal{N}^n_+)^{\ast}$ is the self-dual cone of all 
$n(n+1)/2 \times n(n+1)/2$ nonnegative matrices (with respect to the trace inner product).
\end{proof}

In particular, $\pi(\mathcal{N}^2_+)^{\ast}$ is the self-dual cone of all $3 \times 3$ 
nonnegative matrices (with respect to the trace inner product). Thus, \\
$$\pi(\mathcal{CP}_2)^{\ast} \subseteq \text{closure}\Big\{\pi(\mathcal{N}^2_+) + 
\pi(\mathcal{S}^2_+)^{\ast} \Big\} \\
= \text{closure}\Big\{M_3(\mathbb{R})^+ + \pi(\mathcal{S}^2_+)^{\ast} \Big\},$$ 
where $M_3(\mathbb{R})^+$ is the self-dual cone of $3 \times 3$ nonnegative matrices. \\

\medskip
Let us now determine $\pi(\mathcal{S}^n_+)^{\ast}$. 

\begin{theorem}\label{thm-2.5.2}
$\pi(\mathcal{S}^n_+)^{\ast} = \Big\{\displaystyle \sum_{i=1}^{m} \text{trace}(A_iY)B_i, \ 
A_i,B_i \in \mathcal{S}^n_+, Y \in \mathcal{S}^n, m \in \mathbb{N} \Big\}$.
\end{theorem}

\begin{proof}
Since $\mathcal{S}^n_+$ is a self-dual cone in $\mathcal{S}^n$, we see that 
$\pi(\mathcal{S}^n_+)^{\ast} = \text{cone}\{\mathcal{S}^n_+ \bigotimes \mathcal{S}^n_+ \}$. 
Any element of $\mathcal{S}^n_+ \bigotimes \mathcal{S}^n_+$ is an operator on $\mathcal{S}^n$ 
of the form $Y \mapsto \text{trace}(AY)B$, where $A, B \in \mathcal{S}^n_+$. The result 
follows from this.
\end{proof}

We thus conclude that 
$$\pi(\mathcal{CP}_2)^{\ast} \subseteq \text{closure}\Big\{M_3(\mathbb{R})^+ + 
\displaystyle \sum_{k=1}^{m} \text{trace}(A_iY)B_i, \ A_i,B_i \in \mathcal{S}^2_+, 
Y \in \mathcal{S}^2, \ m \in \mathbb{N}\Big\}.$$ 
Notice that this calculation carries over for $n \leq 4$ with appropriate modifications, 
as $\mathcal{CP}_n = \mathcal{N}^n_+ \cap \mathcal{S}^n_+$ for such $n$. \\
 
\medskip
There is, however, a different way to look at $\pi(\mathcal{S}^2_+)^{\ast}$. 
As before, we shall make use of the fact that $\mathcal{S}^2_+$ 
is isomorphic to the Lorentz cone $\mathcal{L}^3_+$ in $\mathbb{R}^3$ through the map 
$T_1$ described in the previous section. Let us also denote by $\Theta(K)$ the 
set of all strong linear preservers (automorphisms) of $K$ for an arbitrary cone $K$. 
Lemma \ref{lemma-2} says that $\pi(\mathcal{S}^2_+) = T_1^{-1} \pi(\mathcal{L}^3_+) T_1$. 
Therefore, $\pi(\mathcal{S}^2_+)^{\ast} = (T_1)^t \pi(\mathcal{L}^3_+)^{\ast} (T_1^{-1})^t$. 
The problem thus reduces to determining $\pi(\mathcal{L}^3_+)^{\ast}$. 
Once again, it is possible to write a representation of an element of $L$ 
in $\pi(\mathcal{L}^3_+)^{\ast}$ as $\displaystyle \sum_{k=1}^{m} ab^t, \ a,b \in 
\mathcal{L}^3_+, m \in \mathbb{N}$ (recall that the Lorentz cone is a self-dual cone). 
However, an alternate description can be derived. The following well known result is due 
to Loewy and Schneider (see Theorem 4.7, \cite{loewy-schneider}).

\begin{theorem}\label{Loewy-Schneider-4.7}
$\pi(\mathcal{L}^n_+) = \text{cone} (\text{closure}(\Theta(\mathcal{L}^n_+)))$.
\end{theorem}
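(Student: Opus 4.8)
The plan is to realize $\pi(\mathcal{L}^n_+)$ as a proper cone in $M_n(\mathbb{R})$ and to prove the nontrivial inclusion $\pi(\mathcal{L}^n_+) \subseteq \mathrm{Cone}(\mathrm{Closure}(\Theta(\mathcal{L}^n_+)))$ by locating the extreme rays. The easy inclusion comes first: every $Q \in \Theta(\mathcal{L}^n_+)$ satisfies $Q(\mathcal{L}^n_+) = \mathcal{L}^n_+ \subseteq \mathcal{L}^n_+$, so $\Theta \subseteq \pi$; since $\pi(\mathcal{L}^n_+)$ is closed (a pointwise limit of cone preservers preserves the closed cone) and is a convex cone, it contains $\mathrm{Cone}(\mathrm{Closure}(\Theta))$. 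One checks $\pi(\mathcal{L}^n_+)$ is in fact proper: it is a closed convex cone by definition, it is pointed because $A(\mathcal{L}^n_+)\subseteq \mathcal{L}^n_+$ and $-A(\mathcal{L}^n_+)\subseteq\mathcal{L}^n_+$ force $A(\mathcal{L}^n_+)\subseteq \mathcal{L}^n_+ \cap -\mathcal{L}^n_+ = \{0\}$, and it has nonempty interior since strict preservers exist (e.g. an invertible $A$ with $A^tJA = J + \epsilon I$ for small $\epsilon>0$ sends $\mathcal{L}^n_+\setminus\{0\}$ into the interior). By Minkowski's theorem a proper cone is the conical hull of its extreme rays, so it suffices to show that every extreme ray of $\pi(\mathcal{L}^n_+)$ lies in $\mathrm{Closure}(\Theta)$.

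Next I would set up the analytic description of membership in $\pi$. Write $J = \mathrm{diag}(-1,\dots,-1,1)$ and $q(x) = x^t J x$, so that $\mathcal{L}^n_+ = \{x : q(x)\ge 0,\ x_n \ge 0\}$, and let $e = (0,\dots,0,1)^t$, a Slater point since $q(e) = 1 > 0$. Using $\{x : q(x)\ge 0\} = \mathcal{L}^n_+ \cup (-\mathcal{L}^n_+)$, the condition $A(\mathcal{L}^n_+)\subseteq \mathcal{L}^n_+$ splits into a quadratic part, $q(x)\ge 0 \Rightarrow q(Ax)\ge 0$, and an orientation part ensuring the forward cone is not flipped to the backward cone. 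The homogeneous S-lemma, available because of the Slater point $e$, turns the quadratic part into the existence of $\mu \ge 0$ with $A^t J A \succeq \mu J$; writing $A^tJA = \mu J + S$ with $S \succeq 0$ then becomes the working criterion, with $\Theta$ corresponding exactly to the orientation-preserving invertible solutions of $A^tJA = \mu J$ (the case $S = 0$, $\mu > 0$).

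With this in hand I would classify extreme rays by the pair $(\mu,S)$. If $\mu = 0$ then $q(Ax) = x^t S x \ge 0$ for every $x \in \mathbb{R}^n$, so $\mathrm{Im}(A)$ lies inside the closed light cone $\{q\ge 0\}$; since $J$ has Lorentzian signature, a subspace on which $q$ is positive semidefinite has dimension at most one, whence $\mathrm{rank}(A)\le 1$, and an extreme rank-one member of $\pi$ must be $u w^t$ with $u,w$ on the null (extreme) rays of $\mathcal{L}^n_+$. Such maps are precisely the rank-one limits $\lim_{\theta\to\infty} e^{-\theta} Q(\theta)$ of scaled boosts, composed with spatial rotations in $\Theta$, and so lie in $\mathrm{Closure}(\Theta)$. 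If $\mu > 0$ and $S \succ 0$, then each null vector is sent to a strictly timelike one, so $A$ maps $\mathcal{L}^n_+\setminus\{0\}$ into the interior; such $A$ lie in the interior of $\pi$ and cannot be extreme. The remaining case, $\mu > 0$ with $S\ne 0$ singular, is the crux: here $A$ lies on the boundary of $\pi$ without being an automorphism, and I would show it is not extreme by producing $A = \tfrac12\big((A+N)+(A-N)\big)$ with $A\pm N \in \pi$, choosing $N\ne 0$ in the $J$-skew directions relative to $A$ so that $(A\pm N)^tJ(A\pm N) = \mu J + S + N^tJN$, and using the positive part of $S$ to absorb $N^tJN$. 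Granting this, every extreme ray is either a null-null rank-one map or an automorphism, both contained in $\mathrm{Closure}(\Theta)$, and Minkowski's theorem yields $\pi(\mathcal{L}^n_+) = \mathrm{Cone}(\mathrm{Closure}(\Theta(\mathcal{L}^n_+)))$.

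The hard part will be this last case. Guaranteeing that the perturbation $N$ can be chosen so that $A\pm N$ both genuinely remain in $\pi$ — in particular keeping the orientation condition intact and ensuring $\mu J + S + N^tJN \succeq \mu' J$ for some $\mu'\ge 0$ precisely in the degenerate directions where $S$ vanishes — is delicate. A careful treatment of the orientation and boundary behaviour for singular $A$ (exactly the feature the S-lemma only controls at the level of the full double cone $\{q\ge0\}$) is what is needed to make the extreme-ray classification rigorous.
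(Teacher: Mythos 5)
The paper offers no proof of this statement to compare against: it is imported verbatim as Theorem 4.7 of Loewy and Schneider \cite{loewy-schneider}. Your outline essentially reconstructs their strategy --- the S-lemma characterization $A^tJA \succeq \mu J$ with $\mu \ge 0$, properness of $\pi(\mathcal{L}^n_+)$ and reduction to extreme rays, and the identification of the extremals as (scalar multiples of) automorphisms together with rank-one maps $uw^t$ with $u,w$ null. The pieces you actually carry out are sound: the easy inclusion $\mathrm{Cone}(\mathrm{Closure}(\Theta)) \subseteq \pi(\mathcal{L}^n_+)$, pointedness and nonempty interior of $\pi(\mathcal{L}^n_+)$, the signature count showing $\mu=0$ forces $\mathrm{rank}(A)\le 1$, the realization of null--null rank-one maps as limits of the maps $e^{-\theta}Q(\theta)$ (which are themselves automorphisms, so the limit lies in $\mathrm{Closure}(\Theta)$ as needed), and the observation that $\mu>0$, $S \succ 0$ places $A$ in the interior, hence off every extreme ray.

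There is nevertheless a genuine gap, and you have located it yourself: the case $\mu>0$ with $S \ne 0$ singular is exactly where the theorem lives, and the proposed splitting $A=\tfrac12\big((A+N)+(A-N)\big)$ is a plan, not a proof. You must produce $N\ne 0$ with $A^tJN+N^tJA=0$, with $\mu J+S+N^tJN \succeq \mu' J$ for some admissible $\mu'\ge 0$ (keeping $\mu'=\mu$ is not enough in general, since $N^tJN$ is typically indefinite and need not be absorbed by $S$ on $\ker S$), and with the orientation condition intact for both $A\pm N$; none of this is verified, and it is the decisive step. A secondary defect compounds this: the pair $(\mu,S)$ is not a function of $A$ (for $A=ee^t$ every $\mu\in[0,1]$ is admissible, so the same matrix falls into your first and third cases), so ``classify extreme rays by $(\mu,S)$'' is not a well-defined case split --- you would need to argue over the whole interval of admissible $\mu$, say by fixing the maximal one. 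As written, the argument proves the easy containment and makes the classification of extremals plausible, but the non-extremality of boundary maps of rank at least two is exactly the content of Loewy and Schneider's Theorems 3.1--4.4, and citing their paper (as the authors do) remains the honest resolution unless that case is completed.
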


Notice also that $\Theta(\mathcal{L}^3_+) = T_1 \Theta(\mathcal{S}^2_+) T_1^{-1}$. It is 
well known that any automorphism of the cone $\mathcal{S}^n_+$ is of the form $PXP^t$ for 
some fixed invertible matrix $P$. We therefore have the following: \\
$$\Theta(\mathcal{L}^3_+) = \Big\{(T_1P)X(P^tT_1^{-1}): X \in \mathcal{S}^2, 
P \ \text{invertible} \Big\}.$$ Thus, a typical element of $\pi(\mathcal{L}^3_+)$ is a 
linear map of the form $\displaystyle \sum_{i=1}^{k} \alpha_i (T_1P_i) X (P_i^t T_1^{-1})$ 
for some $\alpha_i \geq 0, X \in \mathcal{S}^2$ and $P_i$ (not necessarily invertible), and 
$k$ varies over the set of natural numbers (notice that we are taking the 
closure of $\Theta(\mathcal{L}^3_+)$ before taking the conic hull). One can now determine 
the dual of the cone $\pi(\mathcal{L}^3_+)$ and thereby also determine 
$\pi(\mathcal{S}^2_+)^{\ast}$. \\

\medskip
To the best of our knowledge, the representation of an element of 
$\pi(\mathcal{S}^2_+)$ described above seems new, although using it to compute the 
dual of $\pi(\mathcal{S}^2_+)$ can be difficult.

\begin{remark}\label{final remark}
Here are a few final points we wish to make.
\begin{itemize}
\item From Theorem \ref{Loewy-Schneider-4.7} and the isomorphism between the Lorentz cone 
$\mathcal{L}^3_+$ and $\mathcal{S}^2_+$, we get a description of $\pi(\mathcal{S}^2_+)$. 
\item We also have $CP_k = \mathcal{N}^k_+ \cap \mathcal{S}^k_+$ for $k = 3$ 
and $4$. However, it is not possible to go through the Lorentz cone for such values of 
$k$. 
\end{itemize} 
\end{remark}   

\medskip

\noindent
{\bf Acknowledgements:} Part of this work was presented in the minisymposium on 
{\it Copositive and Completely Positive Matrices} at the $24^{th}$ ILAS meeting at 
Galway, Ireland, in June, 2022. The authors thank the organizers of the minisymposium 
for the presentation. Thanks are also due to Professor M. S. Gowda for his encouragement 
and comments as well as for pointing out the relevant results from his paper.

\bibliographystyle{amsplain}

\end{document}